\newcommand{\R}{\mathbb{R}}
\newcommand{\N}{\mathbb{N}}
\def\a{\alpha}
\def\b{\beta}
\def\d{\delta}
\def\D{\Delta}
\def\D{\Delta}
\def\g{\gamma}
\def\l{\lambda}
\def\e{\varepsilon}
\def\pd{\partial}
\def\half{\frac{1}{2}}
\newcommand{\cF}{{\cal F}}
\newcommand{\cM}{{\mathcal M}}
\newcommand{\cS}{{\cal S}}
\newcommand{\f}{\frac}
\newcommand{\Lip}{{\rm Lip}}
\newcommand{\AM}{{\rm AM}}
\newcommand{\AML}{{\rm AMLE}}
\newtheorem{theorem}{Theorem}[section]
\newtheorem{lemma}[theorem]{Lemma}
\newtheorem{definition}[theorem]{Definition}
\newtheorem{proposition}[theorem]{Proposition}
\newtheorem{corollary}[theorem]{Corollary}
\newtheorem{remark}[theorem]{Remark}
\numberwithin{equation}{section}
\title{\Large \bf Absolutely Minimizing Lipschitz Extensions
        and Infinity Harmonic Functions on the Sierpinski gasket}
\author{Fabio Camilli\footnotemark[1] \and Raffaela Capitanelli\footnotemark[1]
        \and Maria Agostina Vivaldi\footnotemark[1]}
\date{\today}
\begin{document}
\maketitle
\footnotetext[1]{Dip. di Scienze di Base e Applicate per l'Ingegneria,  ``Sapienza'' Universit{\`a}  di Roma, via Scarpa 16,
 00161 Roma, Italy, ({\tt e-mail:camilli,capitanelli,vivaldi@sbai.uniroma1.it})}
\pagestyle{plain}
\pagenumbering{arabic}
\begin{abstract}
Aim of this note is to study the infinity Laplace operator and the corresponding Absolutely Minimizing Lipschitz Extension problem on the Sierpinski gasket in the spirit of the classical construction of Kigami for the Laplacian. We introduce a notion of infinity harmonic functions on   pre-fractal sets
and we show that these functions solve a  Lipschitz extension problem in the discrete setting. Then we  prove that the limit   of the infinity harmonic functions on the pre-fractal sets solves the    Absolutely Minimizing Lipschitz Extension problem on the Sierpinski gasket.
\end{abstract}
 \begin{description}
\item [{\bf MSC2010}:]  31C20, 28A80.
\item [{\bf Keywords}:] Sierpinski gasket, McShane-Whitney extensions, Absolute Minimizing Lipschitz Extension,   infinity Laplacian.
\end{description}
\section{Introduction}
The theory of Absolutely Minimizing Lipschitz Extension (\cite{acj,c}) concerns the classical problem of extending a Lipschitz continuous function
 $f$ defined on the boundary of an open set $U\subset\R^d$ to the interior of $U$ without increasing the Lipschitz constant. In other words,  to find a Lipschitz continuous function $u:\overline U\to\R$ such that $u=f$ on $\pd U$ and  $\Lip(u, U)=\Lip(f,\pd U)$ ($\Lip$ denotes the  Lipschitz constant).\par
The previous problem has several solutions, all in between a maximal  and a minimal  one called  respectively the  McShane's extension and the Whitney's extension.  But, among all these possible solutions, the ``canonical" one   is the so called  Absolutely Minimizing Lipschitz Extension (AMLE in short). This function is characterized by satisfying  the extension problem  not only in  $U$, but  also  in any  open subset $V$ of $U$, that is  $\Lip(u, V)=\Lip(u,\pd V)$ for any  open   $V\subset U$. The relevance  of the notion of AMLE relies in the several additional properties that this function satisfies, for  example
it is  the unique viscosity solution of the Dirichlet problem for the infinity Laplace equation
\begin{equation}\label{intro1}
\Delta_\infty u(x)=0, \qquad x\in U.
\end{equation}
Here $\D_\infty u=\sum_{i,j=1}^d\pd_{x_i}u\pd_{x_j} u\pd^2_{x_ix_j}u$, the infinity Laplacian,  is a  nonlinear degenerate second order operator and
a function satisfying \eqref{intro1} is said infinity harmonic.\par
The theory of AMLE has been extended to lenght spaces   (see  \cite{acj,cdp,j,js, pssw}), hence it applies in particular
to the Sierpinski gasket $\cS$ endowed with its geodesic distance. Prescribed  a boundary data $f$ on the vertices of the initial simplex from which $\cS$ is obtained via iteration, there exists   a unique    AMLE of $f$ to $\cS$. Moreover this function can be characterized as in the Euclidean case by an intuitive geometric property, called  Comparison with Cones.  \par
After the seminal work of Kigami   \cite{k}, a standard way to define a harmonic function on
 the Sierpinski gasket, and more in general for the class of post-critically finite fractals,  is to consider      the uniform limit
of solutions of suitable scaled finite difference differences on the pre-fractals.

%
For the infinity Laplace operator this approach   has been  pursued in \cite{g}. In this thesis, a graph infinity Laplacian on pre-fractal sets is defined and an algorithm to compute explicitly
infinity harmonic functions   is studied. By means of a constructive approach based on the previous algorithm,  it is proved that   the sequence of the infinity harmonic functions on the pre-fractal sets converges to a function defined on the limit fractal set. It is worth noting  that the same graph  infinity Laplacian is used in the Euclidean case to approximate the viscosity solution of \eqref{intro1} (see \cite{lg,mos,o}).\par
Following an approach  similar to \cite{g}, we aim to define an  infinity harmonic  function on $\cS$ as the limit of solutions of  finite difference equations on  pre-fractals. We  study  the Lipschitz extension problem on  the  pre-fractal sets and we show that an appropriate   notion of AMLE  can be introduced in this framework. We also prove that an AMLE is a  solution of the graph infinity Laplacian and it    satisfies a Comparison with  Cone property
with respect to the path distance. The Comparison with  Cone   property on the  pre-fractals is crucial since it  allows us to show that the limit of the AMLEs on the  pre-fractals is an AMLE on the Sierpinski.   The convergence result allows us to define
an infinity harmonic function on the Sierpinski as the limit of infinity harmonic functions on the pre-fractals and to conclude the equivalence, as in
Euclidean case, between  AMLE and infinity harmonic functions. Hence, besides  giving a simpler proof of the convergence
result   in \cite{g}, we also obtain as in the Euclidean case   the equivalence among the various properties which characterize the AMLE theory.
We   remark that the previous construction   on  the Sierpinski gasket can be readily extended
to the class of  post-critically finite fractals since it is only based on the convergence of the path distance defined  on the pre-fractal sets
to the path distance (intrinsic length) on the limit fractal set (\cite{ccm}, \cite{hs}).\par
The paper is organized as follows. In Section \ref{sec2} we introduce   notations and definitions and we prove some preliminary properties
    for the graph infinity Laplacian. Section  \ref{sec3} is devoted to the AMLE problem on pre-fractals. In Section  \ref{sec4} we recall the
definition of AMLE in metric spaces and we prove the convergence result for the pre-fractals invading  the Sierpinski gasket. Finally, in Section  \ref{sec5}  we describe the algorithm in \cite{g} and we study the relation between infinity and   $p$-harmonic function in the pre-fractals.

\section{Notations and preliminary results}\label{sec2}
In this section we introduce   notations and definitions  and we collect some preliminary results
on infinity harmonic functions on graphs.\par
Consider an unitary equilateral triangle $V^0$ of  vertices  $\{q_1,q_2,q_3\}$ in $\R^2$  and the  maps $\psi_i:\R^2\to\R^2$,  $i=1,2,3$,  defined by
\[\psi_i(x):=q_i+\half(x-q_i).\]
Iterating the $\psi_i$'s, we get the set
\[\displaystyle{V^\infty=\cup_{n=0}^\infty V^n}\]
where  each $V^n$ is given  by the union of the images of    $V^0$ under the action of the maps  $\psi_w=\psi_{w_1} \circ\dots\circ\psi_{w_n}$  with $w=(w_1,\dots, w_n)$,  $w_i\in\{1,2,3\}$, a word     of length $|w|=n$.
Then the Sierpinski gasket  $\cS$ is the closure of $V^\infty$ (with respect to the Euclidean topology) and it is the unique non empty compact set $F$ which satisfies
\[
F=\cup_{i=1}^{3}\psi_i(F).
\]
For any $n$, we can identify $V^n$ with the graph $(V^n,\sim_n)$, where   $\sim_n$ is the following relation on $V^n$: for $x,y\in V^n$, $x\sim_n y$ if and only if the segment connecting $x$ and $y$ is the image of a side of the starting simplex  under the action of some $\psi_w$ with $|w|=n$. If $x,y\in V^n$ and  $x\sim_n y$, then we will say that  $x$, $y$ are \emph{adjacent} in $V^n$.\\
Given a  set $K\subset V^n\setminus V^0$, we define     the  boundary and the closure   of $K$ by
\begin{align*}
   &\pd K=\{ y\in V^n\setminus K:\, \exists x\in K  \,\text{s.t.}\, y\sim_n x\},  \\
    &\overline K=K\cup\partial K.
\end{align*}
The distance between two adjacent vertices $x,y\in V^n$ is
\[d_n(x,y)=\frac{1}{2^n}=:\d_n,\]
 while the distance between   $x,y\in V^n$ is the {\it vertex distance}
\begin{equation}\label{geodistD}
d_{n}(x,y):=\min\{d_n(x_0,x_1)+d_n(x_1,x_2)+\dots+d_n(x_{N-1},x_N) \}
\end{equation}
where the minimum  is   over all the finite path $\{x_0=x,x_1,\dots, x_N=y\}$  with $x_i\sim_n x_{i+1}$, $i=0,\dots,N-1$, connecting $x$ to $y$.\\
We also consider  the distance $d_{n,K}$ between   $x,y\in \overline K$  defined as in \eqref{geodistD}
where the minimum  is taken over all the finite paths restricted to stay inside $K$, i.e. $\{x=x_0,\dots, x_N=y\}$    with $x_i\sim_n x_{i+1}$ for $i=0,\dots,N-1$ and $x_i\in K$ for  $i=1,\dots,N-1$ (if there is no path in $K$ connecting $x$ to $y$ we set $ d_{n,K}(x,y)=+\infty$). Observe that in general
$d_n(x,y)\le d_{n,K}(x,y)$ for $x,y\in \overline K$.
\begin{definition}
A set $K\subset V^n\setminus V^0$ is said to be \emph{connected} if
$d_{n,K}(x,y)<+\infty$  for any $x,y\in\overline K$.
\end{definition}
For $u:V^n\to\R$  and   a non empty  subset $K\subset V^n\setminus V^0$, we define
\begin{align}
&\Lip^n(u,   K)=\max_{x,y \in \bar K,x\neq y}\frac{|u(x)-u(y)|}{d_{n,K}(x,y)},\label{slope}\\
&\Lip^n(u,\pd K)=\max_{x,y \in \pd K,x\neq y}\frac{|u(x)-u(y)|}{d_{n,K}(x,y)},\label{slope_boundary}
\end{align}
\textbf{(note that in \eqref{slope}  the maximum is taken over the set $\bar K=K\cup \partial K$).}
For $x\in V^n\setminus V^0$
\begin{align}
 &\D_\infty^n u (x)=\max_{y\sim_n x} \{u(y)-u(x)\} +\min_{y\sim_n x} \{u(y)-u(x)\},\label{inf_n}\\
 &  \cF^n (u,x)=\max_{y\sim_n x} \f{|u(x)-u(y)|}{\d_n}.\label{loclip}
\end{align}
We give  some basic properties of the previous operators.
\begin{definition}
A function $u:V^n\to\R$ is said \emph{infinity harmonic}  in $K\subset V^n\setminus V^0$ if
\[
       \D_\infty^n u(x)=0\qquad \text{for all $x\in  K$.}
\]
\end{definition}
\begin{proposition}\label{comparison}
Let $K\subset V^n\setminus V^0$ be a connected set.
\begin{itemize}
 \item[(i)] If $u,v:V^n\to \R$ satisfy
\[
\Delta_\infty^n u\ge 0,\quad \Delta_\infty^n v\le 0 \qquad \text{in $K$}
\]
and $u\le v$ on $\pd K$, then $u\le v$ in $\overline K$.
\item[(ii)] For any $g:\pd  K\to\R$, there exists a unique infinity harmonic function  $u$   in $K$ and  such that $u=g$ on $\pd K$.
Moreover $\min_{\pd K} g\le u \le \max_{\pd K} g$
and
\begin{equation}\label{harnack}
 \text{either}\quad\min_{y\sim_n x} \{u(y)-u(x)\}<0< \max_{y\sim_n x} \{u(y)-u(x)\} \quad \text{or} \quad u(y)=u(x)\quad\forall y\sim_n x.
\end{equation}
\end{itemize}
\end{proposition}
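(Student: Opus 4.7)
The plan is to prove (i) by contradiction and then derive (ii) from it.

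For (i), I would assume $M:=\max_{\overline K}(u-v)>0$ and set $S:=\{x\in\overline K:(u-v)(x)=M\}$. Since $u\le v$ on $\pd K$, the set $S$ lies inside $K$. For any $x\in S$, the inequality $(u-v)(y)\le(u-v)(x)=M$ at each neighbor $y\sim_n x$ gives $u(y)-u(x)\le v(y)-v(x)$, so
\[
\max_{y\sim_n x}(u(y)-u(x))\le\max_{y\sim_n x}(v(y)-v(x)),\qquad \min_{y\sim_n x}(u(y)-u(x))\le\min_{y\sim_n x}(v(y)-v(x)).
\]
Adding these and comparing with $\D_\infty^n u(x)\ge 0\ge\D_\infty^n v(x)$ forces both inequalities to be equalities, and a short chase then shows that any neighbor realizing $\max_{y\sim_n x}(u(y)-u(x))$ (or the min) lies in $S$. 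Iterating along a chain of max-neighbors $x_0,x_1,\dots$ keeps us in $S$. If the chain enters $\pd K$, we contradict $u\le v$ there. Otherwise, by finiteness of $V^n$ the chain must cycle; summing the nonnegative increments $u(x_{i+1})-u(x_i)=\max_{y\sim_n x_i}(u(y)-u(x_i))$ around a cycle forces each to vanish, and combined with $\D_\infty^n u(x_i)\ge 0$ this forces $\min_{y\sim_n x_i}(u(y)-u(x_i))=0$ as well, so $u$ (and by the equality argument also $v$) is constant on the star of every cycle vertex; every neighbor of a cycle vertex then lies in $S$. A flood-fill using connectivity of $\overline K$ enlarges $S$ until it touches $\pd K$, the final contradiction.

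For (ii), uniqueness follows from (i) applied in both directions to two candidate solutions. For existence, rewrite $\D_\infty^n u(x)=0$ as the fixed-point identity $u(x)=\half(\max_{y\sim_n x}u(y)+\min_{y\sim_n x}u(y))$. Let $C:=\{u:\overline K\to[\min_{\pd K}g,\,\max_{\pd K}g]\,:\,u\equiv g\text{ on }\pd K\}$, a nonempty compact convex subset of $\R^{\overline K}$, and define $T:C\to C$ by this formula on $K$ and $Tu\equiv g$ on $\pd K$. The range $[\min g,\max g]$ is preserved, since a midpoint of values in that interval stays in it, and $T$ is continuous as the finite max and min of linear maps. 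Brouwer's fixed-point theorem then produces a fixed point, which is the required infinity harmonic extension. The bounds $\min_{\pd K}g\le u\le\max_{\pd K}g$ follow at once by applying (i) against the constant super- and subsolutions $\max_{\pd K}g$ and $\min_{\pd K}g$. Finally, property \eqref{harnack} is immediate from $\D_\infty^n u(x)=0$: if $\min_{y\sim_n x}(u(y)-u(x))=0$ then also $\max_{y\sim_n x}(u(y)-u(x))=0$, hence $u(y)=u(x)$ for every $y\sim_n x$; the opposite case is symmetric, and otherwise both extremes are strict with opposite signs.

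I expect the main obstacle to be the cycling step in (i): on a finite graph the max-neighbor chain may circle inside $K$ without ever reaching $\pd K$, so one must combine the sub- and super-solution equalities at cycle vertices to extract local constancy of $u$ (and $v$) there, and only then spread $S$ across $\overline K$ by connectivity. The rest of the proposition is either a routine consequence of (i) or a finite-dimensional fixed-point argument.
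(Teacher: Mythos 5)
Your part (ii) is sound: the paper itself only cites \cite[Theorem 5]{mos} for existence, and your Brouwer fixed-point argument on the compact convex set $C$ is a legitimate self-contained substitute; uniqueness from (i), the bounds via the constant super- and subsolutions, and \eqref{harnack} all follow as you say. The problem is in part (i), at the final ``flood-fill'' step, and it is a genuine gap rather than a presentational one. What your equality chase actually yields at $x\in S$ is only this: every neighbour $y^*$ realizing $\max_{y\sim_n x}(u(y)-u(x))$ satisfies $(u-v)(y^*)=M$ and hence lies in $S$. (The parenthetical ``or the min'' is already incorrect: for a neighbour $y_*$ realizing $\min_{y\sim_n x}(u(y)-u(x))$ one only gets $v(y_*)-v(x)\ge \min_{y\sim_n x}(v(y)-v(x))$, which is vacuous; it is a neighbour realizing the \emph{$v$-minimum} that is forced into $S$.) So $S$ is closed under taking $u$-max-neighbours, and at a cycle vertex you correctly deduce local constancy of $u$ and that all of its neighbours lie in $S$. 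But a neighbour $z$ of a cycle vertex, though in $S$, need not satisfy $\max_{y\sim_n z}(u(y)-u(z))=0$, so at $z$ you again control only its max-neighbour, not its whole star. The flood-fill therefore stalls: restarting max-chains discovers further points of $S$, but nothing forces this process to reach $\pd K$, which is the contradiction you need.

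The standard repair is one extra line: among all points of $S$ choose $x^*$ maximizing $u$ over $S$. Its max-neighbour lies in $S$ and has $u$-value at least $u(x^*)$, hence equal to it; so $\max_{y\sim_n x^*}(u(y)-u(x^*))=0$, whence (using $\D_\infty^n u(x^*)\ge 0$ and $\min\le\max$) $u$ is constant on the star of $x^*$, every neighbour of $x^*$ lies in $S$ and again maximizes $u$ over $S$. Thus the set $\{x\in S:\,u(x)=\max_S u\}$ is closed under taking arbitrary neighbours; since $K$ is connected to its nonempty boundary, a path from $x^*$ to $\pd K$ would have to stay in $S\subset K$ while terminating in $\pd K$ --- the desired contradiction. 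For the record, the paper does not prove this proposition at all: it quotes the comparison principle from \cite[Theorem 4]{mos}, existence from \cite[Theorem 5]{mos} or the Lazarus algorithm, and \eqref{harnack} from \cite[Lemma 3]{mos}; your attempt is more self-contained than the source, but it needs this patch to close.
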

\begin{proof}
If $K$ is connected, then it   is   connected with the boundary, i.e. for any $x\in K$  there is $y\in \pd K$ such that $d_{n,K}(x,y)<\infty$. Under this assumption,
the comparison principle in (i)  is proved in  \cite[Theorem 4]{mos}. Existence
can be proved either  as in \cite[Theorem 5]{mos}  by means of a fixed point argument or via  the constructive approach given by the Lazarus
algorithm, see Section \ref{lazarus} for details. The estimate at the boundary  is again consequence of  the  comparison principle and  since
the constants are infinity harmonic. For property \eqref{harnack}, see \cite[Lemma 3]{mos}.
\end{proof}
In the next proposition we consider properties of the distance function with respect to the graph infinity Laplacian \eqref{inf_n}.
\begin{proposition}\label{distance}
Let $K\subset V^n\setminus V^0$ be connected and $x_0  \in \pd K$. Then the function  $u(x)= d_{n,K}(x_0,x)$
(respectively, $v(x)=-d_{n,K}(x_0,x)$)
satisfies $ \Delta_\infty^n u\le 0$ (respectively, $ \Delta_\infty^n v\ge 0$) in $K$.
\end{proposition}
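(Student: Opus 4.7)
The plan is to prove the inequality for $u(x)=d_{n,K}(x_0,x)$ directly and obtain the one for $v=-u$ for free: negation swaps $\max$ and $\min$ in \eqref{inf_n}, so $\Delta_\infty^n(-u)=-\Delta_\infty^n u$, and $\Delta_\infty^n u\le 0$ immediately yields $\Delta_\infty^n v\ge 0$. Fix $x\in K$; the goal is then to show
\[
\max_{y\sim_n x}\bigl(u(y)-u(x)\bigr)+\min_{y\sim_n x}\bigl(u(y)-u(x)\bigr)\le 0.
\]
Observe first that every neighbor $y\sim_n x$ lies in $\bar K$ (it is either in $K$ or, by the very definition of $\partial K$, in $\partial K$), so $u(y)$ is well defined for all such $y$.

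For the max term I would use the triangle inequality for $d_{n,K}$ together with the fact that the one-edge path from $x$ to $y$ is admissible in the sense of \eqref{geodistD} (it has no interior vertex, so no membership constraint applies), giving $d_{n,K}(x,y)\le\delta_n$ and hence $u(y)-u(x)\le d_{n,K}(y,x)\le\delta_n$. For the min term the idea is to exhibit a neighbor that realizes a drop of $\delta_n$: since $K$ is connected and $x_0\in\bar K$, the distance $d_{n,K}(x_0,x)$ is finite and is attained by some path $x_0,x_1,\dots,x_{N-1},x_N=x$ with $x_i\in K$ for $i=1,\dots,N-1$. Set $y^*:=x_{N-1}$. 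Then $y^*\sim_n x$, and truncating the optimal path at $y^*$ yields an admissible path from $x_0$ to $y^*$ of length $(N-1)\delta_n$, so $u(y^*)\le u(x)-\delta_n$. Combining the two bounds gives $\Delta_\infty^n u(x)\le\delta_n+(-\delta_n)=0$.

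The only point demanding care is to check that $y^*$ is a legitimate argument of the min in \eqref{inf_n}, i.e.\ that $y^*\sim_n x$ and $y^*\in\bar K$ so that $u(y^*)$ is defined. The adjacency is by construction. For the membership, if $N\ge 2$ then $y^*=x_{N-1}$ is an interior vertex of the optimal path and hence lies in $K$; if $N=1$ then $y^*=x_0\in\partial K\subset\bar K$. Both degenerate cases thus cause no trouble. Conceptually this is the discrete counterpart of the classical fact that the Euclidean cone $x\mapsto|x-x_0|$ is infinity superharmonic away from $x_0$, and the distance function plays exactly the same role of elementary building block in the AMLE theory on graphs.
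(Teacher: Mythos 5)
Your proof is correct and follows essentially the same route as the paper's: bound the max term by $\delta_n$ and realize a drop of at least $\delta_n$ at the predecessor of $x$ on a geodesic from $x_0$. Your version is in fact slightly tidier, replacing the paper's case analysis for the max term with the triangle inequality and deriving the statement for $v=-u$ from the identity $\Delta_\infty^n(-u)=-\Delta_\infty^n u$ rather than repeating the argument.
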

\begin{proof}
Consider the function $u(x)=d_{n,K}(x_0,x)$.
Given $x\in  K $, then the minimum of $u(y)-u(x)$, $y\sim_n x$, will be a negative one, along an  adjacent point
$\bar y$ which is contained in the shortest path from $x$ to $x_0$. Since the path from $\overline y$ to $x_0$ will be one vertex shorter, then
$\min_{y\sim_n x}\{u(y)-u(x)\}=u(\overline y)-u(x)=-\d_n$.\\
If $u$ does   reach the maximum in $K$  at  $x$,  then $\max_{y\sim_n x}\{u(y)-u(x)\}\le 0$; otherwise  the maximum of $u(y)-u(x)$, $y\sim_n x$, will be positive attained at a   point $\underline y$   such that $x$   is contained in the shortest path from $\underline y$  to $x_0$ and
$\max_{y\sim_n x}\{u(y)-u(x)\}=u(\underline y)-u(x)=\d_n$.
 Hence
 \[\D_\infty^n u(x)=\max_{y\sim_n x}\{u(y)-u(x)\}+\min_{y\sim_n x}\{u(y)-u(x)\}\le \d_n-\d_n=0. \]
In a similar way it is possible to prove that $v(x)=-d_{n,K}(x_0,x)$
satisfies $ \Delta_\infty^n v\ge 0$ in $K$.
\end{proof}
In the next proposition we prove  that a function $u$ is linear along any minimal path joining   two points which realizes the maximum of the slope at the boundary, see  \eqref{slope_boundary}. This is a crucial property that will be exploited  in Section \ref{lazarus} to explicitly compute an infinity harmonic function on $V^n$.
\begin{proposition}\label{linear}
Let $K\subset V^n\setminus V^0$ be connected, $u:K\to\R$ be such that $\Lip^n(u,K)=\Lip^n(u, \pd  K)$
and $x,y\in  \pd K$   such that
\[\Lip^n(u, \pd K)=\frac{|u(x)-u(y)|}{d_{n,K}(x,y)}.\]
If  $\g=\{x=x_0,x_1,\dots, x_N=y\}$ is a minimal path for $d_{n,K}$ joining  $x$ to $y$,
then $u$ is linear  along $\g$, i.e.
\[u(x_i)=\frac{d_{n,K}(x_i,x)u(y)+d_{n,K}(x_i,y)u(x)}{d_{n,K}(x,y)},\qquad i=0,\dots,N\]
\end{proposition}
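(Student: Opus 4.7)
The plan is to exploit the saturation of the Lipschitz constant: the ratio $|u(x)-u(y)|/d_{n,K}(x,y)$ attains the global maximum $L := \Lip^n(u,K)$, so any triangle-inequality splitting along the geodesic $\gamma$ must also saturate.

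First I would normalize: after possibly swapping $x$ and $y$, assume $u(y)\ge u(x)$, so that
\[
u(y)-u(x)=L\,d_{n,K}(x,y).
\]
Fix an intermediate vertex $x_i$ on the minimal path $\gamma$. Because $\gamma$ is a geodesic in $K$, the sub-paths $x_0,\dots,x_i$ and $x_i,\dots,x_N$ are themselves minimal for $d_{n,K}$, hence
\[
d_{n,K}(x,x_i)+d_{n,K}(x_i,y)=d_{n,K}(x,y).
\]
Then I split telescopically
\[
u(y)-u(x)=\bigl[u(y)-u(x_i)\bigr]+\bigl[u(x_i)-u(x)\bigr]
\le |u(y)-u(x_i)|+|u(x_i)-u(x)|,
\]
and apply the bound $\Lip^n(u,K)=L$ (which, by the definition in \eqref{slope}, is taken over $\bar K$ and so applies to the pairs $(x,x_i)$ and $(x_i,y)$) to estimate each term by $L\,d_{n,K}(x,x_i)$ and $L\,d_{n,K}(x_i,y)$ respectively. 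The right-hand side then equals $L\,d_{n,K}(x,y)$, matching the left-hand side.

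Equality throughout forces three things simultaneously: the telescoping differences are nonnegative (so the triangle inequality in absolute values is an equality), and each of them saturates the Lipschitz bound. Thus
\[
u(x_i)-u(x)=L\,d_{n,K}(x,x_i)=\frac{u(y)-u(x)}{d_{n,K}(x,y)}\,d_{n,K}(x,x_i),
\]
and using $d_{n,K}(x,x_i)=d_{n,K}(x,y)-d_{n,K}(x_i,y)$ from the geodesic identity, I rearrange to obtain exactly the claimed convex combination
\[
u(x_i)=\frac{d_{n,K}(x_i,x)\,u(y)+d_{n,K}(x_i,y)\,u(x)}{d_{n,K}(x,y)}.
\]

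The argument is essentially a metric-space identity with no real analytic obstacle; the only point that warrants verification is the geodesic additivity $d_{n,K}(x,x_i)+d_{n,K}(x_i,y)=d_{n,K}(x,y)$ in $K$, which could fail if the shortest sub-paths used different detours — but minimality of $\gamma$ together with the fact that concatenating the two sub-paths gives a path of length $d_{n,K}(x,y)$ forces this equality. I expect this bookkeeping around the restricted distance $d_{n,K}$ (as opposed to the unrestricted $d_n$) to be the only subtlety.
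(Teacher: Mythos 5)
Your proof is correct and rests on the same mechanism as the paper's: saturation of the triangle inequality along the minimal path, using that $\Lip^n(u,K)=L$ bounds $|u(a)-u(b)|$ by $L\,d_{n,K}(a,b)$ for all $a,b\in\bar K$ together with geodesic additivity of the sub-paths. The only difference is cosmetic --- you split at a single intermediate vertex and argue directly for equality, whereas the paper telescopes over all $N$ edges and argues by contradiction; your direct version has the small advantage of making explicit the sign condition (both increments nonnegative, forced by equality in the absolute-value triangle inequality), which the paper's contradiction argument leaves implicit.
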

\begin{proof}
Set $L=\Lip^n(u,  K)$.
If $u$ is not linear along $\g$ and since $\Lip^n(u,\pd K)= L$ there exists an index $i\in \{1,\dots,N\}$ such
that
\begin{equation}
    |u(x_i)-u(x_{i-1})|<L d_{n,K}(x_i, x_{i-1})\label{linear1}.
\end{equation}
Then, by $\Lip^n(u,K)= L$  and \eqref{linear1}
\begin{align*}
    Ld_{n,K}(y,x)&=|u(y)-u(x_{N-1})+ \dots +u(x_{i+1})-u(x_i)+u(x_i)-u(x_{i-1})+\dots+ u(x_1)-u(x)|\\
                      &< L\big[d_{n,K}(y, x_{N-1})+\dots+L d_{n,K}(x_i, x_{i-1})+\dots+d_{n,K}(x_{1}, x)\big]=Ld_{n,K}(y,x)
\end{align*}
and therefore a contradiction.
\end{proof}
The next proposition connects the functional $\cF^n(u,x)$, which can be interpreted as the  Lipschitz constant of $u$ at $x$,
with the Lipschitz constant $\Lip^n(u,K)$ in  $K$.
\begin{proposition}\label{Lip_F}
For $u:V^n \to\R$ and for any connected set $K\subset V^n\setminus V^0$,
\begin{equation}\label{lip_equiv_func}
\Lip^n(u,  K) =\max_{x  \in K} \cF^n(u,x)
\end{equation}
 \end{proposition}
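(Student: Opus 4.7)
The identity asserts that the Lipschitz constant on $K$ (measured with the path distance $d_{n,K}$) coincides with the maximal one-step slope inside $K$. My plan is to establish the two inequalities separately.

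For the easy direction $\max_{x\in K}\cF^n(u,x)\le\Lip^n(u,K)$, I would fix $x\in K$ and any neighbor $y\sim_n x$. By the definition of $\pd K$, the point $y$ lies in $K\cup\pd K=\overline K$, and the single-edge path $(x,y)$ is admissible in the definition of $d_{n,K}$ because there are no intermediate vertices to constrain; consequently $d_{n,K}(x,y)=\d_n$. Hence $|u(x)-u(y)|/\d_n$ is one of the ratios appearing in the definition of $\Lip^n(u,K)$, and taking the maximum first over $y\sim_n x$ and then over $x\in K$ yields the bound.

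For the reverse inequality, I would set $L=\max_{x\in K}\cF^n(u,x)$, fix distinct $x,y\in\overline K$ with $d_{n,K}(x,y)<\infty$, and choose a $d_{n,K}$-minimizing path $\gamma=(x=x_0,x_1,\dots,x_N=y)$, so that $N\d_n=d_{n,K}(x,y)$ with $x_1,\dots,x_{N-1}\in K$. The triangle inequality yields
$$|u(x)-u(y)|\le\sum_{i=0}^{N-1}|u(x_i)-u(x_{i+1})|,$$
and for each edge at least one endpoint is an intermediate vertex, hence lies in $K$; evaluating $\cF^n$ there bounds that increment by $L\d_n$. Summing over the $N$ edges gives $|u(x)-u(y)|\le L\cdot N\d_n=L\,d_{n,K}(x,y)$, and passing to the supremum over admissible pairs in $\overline K$ produces the second inequality.

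The main point requiring care is ensuring that every edge of the minimizing path is incident to a vertex of $K$ so that $\cF^n$ can be applied. For $N\ge 2$ this is automatic, since each edge touches an intermediate vertex which lies in $K$ by the very definition of $d_{n,K}$. The slightly delicate case is $N=1$, a direct edge between two points of $\overline K$; here if one endpoint already lies in $K$ the bound is immediate, and otherwise one invokes the connectedness of $K$ to select a longer admissible path through interior vertices so that the telescoping estimate applies. This is the step where the hypothesis that $K\subset V^n\setminus V^0$ is connected plays its role.
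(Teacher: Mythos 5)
Your overall strategy coincides with the paper's proof: both establish the easy inequality $\max_{x\in K}\cF^n(u,x)\le\Lip^n(u,K)$ directly, and both obtain the reverse inequality by telescoping $|u(x)-u(y)|$ along a $d_{n,K}$-minimal path and bounding each increment by $\d_n\cF^n(u,x_i)$ at an intermediate vertex $x_i\in K$ (the paper uses $\cF^n(u,x_1)$ for the first edge). That part of your argument is correct and is essentially the published one.

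The place where you depart from the paper, namely your treatment of the case $N=1$ with both endpoints in $\pd K$, is not correct as written. If such a direct edge were admissible for $d_{n,K}$, then $d_{n,K}(x,y)=\d_n$, and replacing it by a longer admissible path $\g'$ through $K$ only yields $|u(x)-u(y)|\le L\,\ell(\g')$ with $\ell(\g')>\d_n=d_{n,K}(x,y)$; this is strictly weaker than the required bound $|u(x)-u(y)|\le L\,d_{n,K}(x,y)$, so ``selecting a longer path'' cannot close the case. Worse, if the direct edge between two boundary vertices really counted in $d_{n,K}$, the identity \eqref{lip_equiv_func} itself would be false: take $K=\{z\}$ for an interior vertex $z$, with $u(z)=0$ and $u$ equal to $1$ and $-1$ at two mutually adjacent neighbours $y_1,y_2$ of $z$; then $\max_{x\in K}\cF^n(u,x)=1/\d_n$ while $|u(y_1)-u(y_2)|/\d_n=2/\d_n$. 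The actual resolution is that the paper's intended reading of $d_{n,K}$ rules this path out: a path joining two points of $\pd K$ must pass through $K$, which is exactly how $d_{n,K}(y_1,y_2)=2\d_n$ is computed for $K=\{x\}$ in the proof of Proposition \ref{equivalence} and how the example in the Remark at the end of Section \ref{sec3} is treated. Under that convention every minimal path between distinct points of $\overline K$ contains a vertex of $K$, the case you worry about never arises, and your telescoping estimate goes through exactly as in the paper; you should replace your proposed fix by this observation.
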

 \begin{proof}
 It is clear that $\cF^n(u,x)\le \Lip^n(u,K)$ for any $x\in K$, hence $\max_{x  \in K} \cF^n(u,x)\le \Lip^n(u,  K)$.
Let $x$, $y\in \overline K$ be such that $\Lip^n(u,K)=|u(x)-u(y)|/d_{n,K}(x,y)$.
Consider a minimal path  $\g=\{x=x_0,x_1,\dots, x_N=y\}$ from $x$ to $y$. Hence
 \begin{align*}
    \frac{|u(x)-u(y)|}{d_{n,K }(x,y)}\le \sum_{i=0}^{N-1}\frac{ |u(x_{i+1})-u(x_i)|}{d_{n,K}(x,y)}\le \f{\d_n}{d_{n,K}(x,y)}( \sum_{i=1}^{N-1}  \cF^n(u,x_i)+\cF^n(u,x_1))
 \end{align*}
 Since $\g$ is composed by $N$ arcs, then $d_{n,K}(x,y)=N\frac{1}{2^n}=N\d_n$, hence
 \[ \frac{|u(x)-u(y)|}{d_{n,K}(x,y)}\le \frac{1}{ N}\sum_{i=0}^{N-1} \max_{x  \in K} \cF^n(u,x)=\max_{x  \in K} \cF^n(u,x)\]
and therefore \eqref{lip_equiv_func}.
 \end{proof}

\section{The Absolutely Minimizing Lipschitz Extension problem in $V^n$}\label{sec3}
In this section we fix $n\in \N$ and we consider the Lipschitz extension  of a function $g:V^0\to\R$ to $V^n$. The following result is the analogous  of the the classical      Whitney-McShane solution  to the  Lipschitz extension problem.
\begin{proposition}
Given   $K\subset V^n\setminus V^0$    connected and  a function $g:\pd K\to \R$, set $L_0=\Lip^n(g,\pd K)$ and
define
\[\cM_*(x)=\max_{y\in \pd K}\{g(y)-L_0d_{n,K }(x,y)\}, \quad\cM^*(x)=\min_{y\in \pd K}\{g(y)+L_0d_{n,K }(x,y)\}.\]
Then $\Lip^n(\cM^*,K)=\Lip^n(\cM_*,K)=L_0$ and for any $u:V^n\to\R$ such that $u=g$ on $\pd K$ and $\Lip(u,K)=L_0$, we have
\begin{equation}\label{MW}
   \cM_*(x)\le u(x)\le \cM^*(x),\quad x\in K.
\end{equation}
\end{proposition}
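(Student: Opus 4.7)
The statement is the discrete pre-fractal analogue of the classical McShane–Whitney extension theorem, and the proof follows the same three-step pattern: verify boundary values, bound Lipschitz constants, then squeeze any competitor.

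First I would check that $\cM^*=\cM_*=g$ on $\partial K$. For $x\in\partial K$, choosing $y=x$ in the defining minimum gives $\cM^*(x)\le g(x)$; for any other $y\in\partial K$, the Lipschitz bound $g(x)-g(y)\le L_0 d_{n,K}(x,y)$ yields $g(y)+L_0 d_{n,K}(x,y)\ge g(x)$, so taking the minimum over $y$ gives the reverse inequality. The same argument applied to $-g$ handles $\cM_*$. Note that connectedness of $K$ ensures $d_{n,K}(x,y)<+\infty$ throughout.

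Next I would bound $\Lip^n(\cM^*,K)$. The upper bound $\le L_0$ is the standard triangle-inequality trick: for $x_1,x_2\in\overline K$ and any $y\in\partial K$,
\[g(y)+L_0 d_{n,K}(x_1,y)\le g(y)+L_0 d_{n,K}(x_2,y)+L_0 d_{n,K}(x_1,x_2),\]
and taking the minimum over $y$ gives $\cM^*(x_1)\le \cM^*(x_2)+L_0 d_{n,K}(x_1,x_2)$, which is symmetric in $x_1,x_2$. The lower bound $\ge L_0$ is immediate from the previous paragraph: since $\cM^*|_{\partial K}=g$, restricting the maximum in \eqref{slope} to $\partial K\subset\overline K$ already recovers $\Lip^n(g,\partial K)=L_0$. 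The argument for $\cM_*$ is identical.

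Finally I would prove \eqref{MW}. If $u:V^n\to\R$ coincides with $g$ on $\partial K$ and has $\Lip^n(u,K)=L_0$, then for every $x\in K$ and every $y\in\partial K$,
\[u(x)-g(y)=u(x)-u(y)\le L_0 d_{n,K}(x,y),\]
so $u(x)\le g(y)+L_0 d_{n,K}(x,y)$. Taking the minimum over $y\in\partial K$ gives $u(x)\le \cM^*(x)$, and the mirror argument gives $u(x)\ge \cM_*(x)$.

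There is no genuine obstacle here, since every ingredient (triangle inequality for $d_{n,K}$, Lipschitz comparison against boundary points, the fact that the minimum/maximum over $\partial K$ is attained because $\partial K$ is finite) is elementary in the discrete graph setting; the role of connectedness of $K$ is merely to ensure that $d_{n,K}$ is finite and the supremum in \eqref{slope} is well posed.
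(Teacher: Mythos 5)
Your proposal is correct and follows essentially the same route as the paper: the squeeze inequality \eqref{MW} via the Lipschitz bound against boundary points, and the bound $\Lip^n(\cM_*,K)\le L_0$ via the triangle inequality for $d_{n,K}$ (the paper picks the optimal $y_1$ in the max, you take the min over $y$ — the same computation). Your explicit verification that $\cM^*=\cM_*=g$ on $\partial K$ and the resulting lower bound $\Lip^n(\cM^*,K)\ge L_0$ are left implicit in the paper but are a worthwhile addition.
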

\begin{proof}
 By the very definition of $\Lip(u,K)$, for any $x\in K$, $y\in \pd K$
\begin{align*}
    g(y)-L_0d_{n,K}(x,y)=u(y)-L_0d_{n,K}(x,y)\le u(x)\\
    u(x)\le u(y)+L_0d_{n,K}(x,y)=g(y)+L_0d_{n,K}(x,y)
\end{align*}
and therefore \eqref{MW}. Let us prove that $\Lip( \cM_*(x), V^n)=L_0$. Given $x_1, x_2\in K$, let $y_1\in \pd K $ be such that $\cM_*(x_1)=g(y_1)-L_0d_n(x_1,y_1)$. Hence,
\[\cM_*(x_1)-\cM_*(x_2)\le g(y_1)-L_0d_{n,K}(x_1,y_1)-g(y_1)+L_0d_{n,K}(x_2,y_1)\le L_0d_{n,K}(x_1,x_2).\]
The proof that $\cM_*(x_1)-\cM_*(x_2)\ge -L_0d_{n,K}(x_1,x_2)$ is similar.
\end{proof}
Our approach to the Absolutely Minimizing Lipschitz Extension problem on $V^n$ is based on the following properties/observations:
\begin{enumerate}
  \item If   $\D_\infty^n u (x)=0$, then $t=u(x)$ minimizes the functional $I(t)=\max_{y\sim_n x} \f{|t-u(y)|}{\d_n}$ giving the Lipschitz
           constant of $u$ at $x$ (see   \cite[Theorem 5]{o}  and  \cite{lg}).
  \item $\Lip^n(u,K)=\max_{x  \in K} \cF^n(u,x)$ (see Prop. \ref{Lip_F}).
  \item If $u$ solves   $\D_\infty^n u (x)=0$ in  $K\subset V^n\setminus V^0$    connected,  then $u$ is linear along a    minimal path for $d_{n,K}$ joining two points which realize the maximal slope $\Lip^n(u,\pd K)$ at the boundary (\cite{g,l}).
            The same property holds also if  $\Lip^n(u,K)=\Lip^n(u, \pd  K)$ (see Prop. \ref{linear}).
 \item   The functions  $\cM_*(x)$,  $ \cM^*(x)$, defined by means of the distance function $d_{n,K}$,  give the minimal and
  maximal solution to the   Lipschitz extension problem  on $K$. Moreover, for $x_0\in \pd K$,  $d_{n,K}(x_0, \cdot)$ and $-d_{n,K}(x_0, \cdot)$
  are a supersolution and a a subsolution of $\Delta_\infty^n u=0$ in $K$ (Prop. \ref{distance})
\end{enumerate}
Indeed it is clear that all the previous concepts are strictly related and, in analogy with the Euclidean case, we introduce the following definitions
    \begin{definition}\label{maindef}\mbox{}
    \begin{itemize}
    \item[(i)]  A function $u:V^n\to\R$ is said an \emph{absolute minimizer for the functional $\Lip^n$} on $V^n$ if for any  connected set $K\subset V^n\setminus V^0$  and for any $v:V^n\to\R$ such that $u=v$ on $\pd K$, then $\Lip^n(u,  K)\le \Lip^n(v,  K)$. We denote by $\AML(V^n)$ the  set of  the  absolute minimizers for $\Lip^n$ in $V^n$.
    \item[(ii)] A function $u:V^n\to\R$ satisfies the Comparison with Cones  property (noted CC property) in $V^n$ if for any connected set $K\subset V^n\setminus V^0 $, for any $x_0\in  \pd  K$, $\l\ge 0$ and  $\a\in\R$
           \begin{align*}
            u\le \l d_{n,K}(x_0,\cdot)+\a\quad \text{on $\partial K$ implies}\quad u\le \l d_{n,K}(x_0,\cdot)+\a \quad\text{on $K$,}\\
            u\ge -\l d_{n,K}(x_0,\cdot)+\a\quad \text{on $\partial K$ implies}\quad u\ge - \l d_{n,K}(x_0,\cdot)+\a \quad\text{on $K$.}
           \end{align*}
      \item[(iii)]  A function  $u$ is said an \emph{absolute minimizer for the functional $\cF^n$} on $V^n$ if for any   $x \in V^n\setminus V^0$ and for any $v:V^n\to\R$ such that $v(y)=u(y)$ for all $y\sim_n x$, then $\cF^n(u,x)\le \cF^n(v,x)$.
            We denote by    $\AM(V^n)$ the set of  the  absolute minimizer for $\cF^n$ in $V^n$.
   \end{itemize}
\end{definition}

We have the following result.
\begin{proposition}\label{equivalence}\mbox{}
The following properties are equivalent
\begin{itemize}
  \item[(i)]    $u\in \AML(V^n)$;
  \item[(ii)]   $u$ satisfies the CC property in $V^n$;
   \item[(iii)]  $u$ is infinity harmonic in $V^n\setminus V^0$;
  \item[(iv)]   $u\in \AM(V^n)$.
\end{itemize}
\end{proposition}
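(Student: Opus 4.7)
I would prove the cycle (i)$\,\Rightarrow\,$(iv), (iv)$\,\Leftrightarrow\,$(iii), (iii)$\,\Leftrightarrow\,$(ii) and (iii)$\,\Rightarrow\,$(i), which closes the equivalence. Three of these implications are short. For (i)$\,\Rightarrow\,$(iv) I would test the $\AML$ property on the singleton $K=\{x\}$: then $\partial K$ is the set of $\sim_n$-neighbors of $x$, and Proposition \ref{Lip_F} identifies $\Lip^n(\cdot,\{x\})$ with $\cF^n(\cdot,x)$, turning the $\AML$ inequality into the $\AM$ inequality. The equivalence (iv)$\,\Leftrightarrow\,$(iii) is pointwise algebra: writing $a=\max_{y\sim_n x}u(y)$ and $b=\min_{y\sim_n x}u(y)$, the convex piecewise-linear map $t\mapsto\max(a-t,t-b)$ has its unique minimum at $t=(a+b)/2$, so $u\in\AM(V^n)$ iff $u(x)=(a+b)/2$ at every $x$, which is exactly $\D_\infty^n u(x)=0$.

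The equivalence (iii)$\,\Leftrightarrow\,$(ii) combines Propositions \ref{distance} and \ref{comparison}. For (iii)$\,\Rightarrow\,$(ii), the cone $\lambda d_{n,K}(x_0,\cdot)+\alpha$ is a supersolution of $\D_\infty^n=0$, so the comparison principle yields the upper CC inequality, and the lower one is symmetric. For the converse (ii)$\,\Rightarrow\,$(iii) I would again apply CC on the singleton $K=\{x\}$, using that $d_{n,\{x\}}(y,z)\in\{0,2\delta_n\}$ for $y,z\sim_n x$: the upper cone centered at a minimizing neighbor $y_b$, with slope $\lambda=(a-b)/(2\delta_n)$ and offset $\alpha=b$, satisfies the CC hypothesis on $\partial\{x\}$ and at $x$ yields $u(x)\le(a+b)/2$; the symmetric lower cone centered at a maximizing neighbor $y_a$ yields the reverse inequality, hence $\D_\infty^n u(x)=0$.

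The main step is (iii)$\,\Rightarrow\,$(i). Given $u$ infinity harmonic in $V^n\setminus V^0$, a connected $K$, and $v$ with $v=u$ on $\partial K$, set $M=\Lip^n(u,K)$ (the case $M=0$ being trivial) and, by Proposition \ref{Lip_F}, pick $z_0\in K$ with $\cF^n(u,z_0)=M$. The plan is to build a chain of distinct vertices $x_-=z_{-N'},\ldots,z_0,\ldots,z_N=x_+$ with $z_i\sim_n z_{i+1}$, increments $u(z_{i+1})-u(z_i)=M\delta_n$ at every step, interior vertices in $K$, and endpoints in $\partial K$. At each interior $z_i\in K$ the edge to $z_{i-1}$ already realizes slope $M$, so $\cF^n(u,z_i)\ge M$; combined with $\cF^n(u,z_i)\le M$ from Proposition \ref{Lip_F} and infinity harmonicity $u(z_i)=(a_i+b_i)/2$, this pins the extremal neighbor values of $z_i$ exactly to $u(z_i)\pm M\delta_n$ and selects $z_{i+1}$. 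Strict monotonicity of $u$ along the chain rules out repetitions, and finiteness of $K$ forces termination in $\partial K$. Along the resulting path of combinatorial length $N+N'$,
\begin{equation*}
|u(x_+)-u(x_-)|=(N+N')M\delta_n\ge M\,d_{n,K}(x_+,x_-),
\end{equation*}
and $u=v$ on $\partial K$ gives $\Lip^n(v,K)\ge M=\Lip^n(u,K)$. The main obstacle is precisely this chain argument: transporting the maximum-slope condition $\cF^n(u,\cdot)=M$ from an interior maximizer all the way out to $\partial K$, which is exactly where infinity harmonicity (pinning $u$ to the midpoint of neighbor values) and finiteness of $K$ work together.
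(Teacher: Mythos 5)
Your proof is correct, and it closes the equivalence by a genuinely different route than the paper. The paper's architecture is (i)$\Leftrightarrow$(ii), (ii)$\Leftrightarrow$(iii), (iii)$\Leftrightarrow$(iv): the implication (i)$\Rightarrow$(ii) is a contradiction argument on the bad set $W=\{u>\a+\l d_{n,K}(x_0,\cdot)\}$ that relies on the linearity-along-geodesics property of Proposition \ref{linear} to force the incompatible inequalities $L<\l$ and $\l<L$, the singleton argument you use for (ii)$\Rightarrow$(iii) appears there too (phrased as a contradiction), and (iii)$\Leftrightarrow$(iv) is delegated to a citation of \cite{o}. You instead route everything through (iii): the cheap implication (i)$\Rightarrow$(iv) via $K=\{x\}$ and Proposition \ref{Lip_F}, the explicit one-variable minimization for (iv)$\Leftrightarrow$(iii) (which replaces the external citation), the same cone-plus-comparison argument for (iii)$\Leftrightarrow$(ii), and---the substantive difference---a direct proof of (iii)$\Rightarrow$(i) by propagating a maximal-slope edge into a strictly monotone chain. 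That chain argument is sound: at an interior vertex $z_i$ the two bounds $\cF^n(u,z_i)\le M$ (from Proposition \ref{Lip_F}) and $\D_\infty^n u(z_i)=0$ do pin the extremal neighbor values to $u(z_i)\pm M\d_n$, strict monotonicity of $u(z_i)=u(z_0)+iM\d_n$ forbids repeated vertices, finiteness of $\overline K$ forces both ends into $\pd K$, and the resulting path of length $(N+N')\d_n$ witnesses $\Lip^n(v,K)\ge M$ for every competitor $v$ agreeing with $u$ on $\pd K$. What the paper's route buys is that Proposition \ref{linear} is exercised here and then reused for the Lazarus algorithm in Section \ref{lazarus}; what yours buys is that the hardest implication becomes a self-contained, elementary ``steepest path'' argument that never mentions cones, and the whole proposition is proved without appeal to \cite{o}.
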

\begin{proof}
To show that (i) implies (ii),  assume by contradiction that there exists  a connected set $K\subset V^n\setminus V^0$, $x_0\in \pd K$, $\a\in\R$ and $\l\ge 0$
such that
\[
    u(x)\le \a+\l d_{n,K}(x_0,x)\qquad\forall x\in \pd K
\]
and that the set $W=\{y\in K:\,u(y)>\a+\l d_{n,K}(x_0,y)\}$ is not empty (we   assume that $W$ is connected otherwise we consider a connected component of $W$).
Observe that  $\pd W\subset \overline K$ and
\begin{equation}\label{nv1}
 u(x)\le \a+\l d_{n,K}(x_0,x)\qquad\forall x\in \pd W.
\end{equation}
 Let $y_1,y_2\in \pd W$ such that, defined  $L:=\Lip^n(u, \pd W)$, then
\[
L=\frac{u(y_2)-u(y_1)}{d_{n,W}(y_1,y_2)}.
\]
Let $\g$ be a minimal path for $d_{n,W}(y_1,y_2)$, $y_0\in \g\cap W$ (this point exists by the  definition of $\Lip^n(u, \pd W)$) and $z_0\in \pd W$
such that $d_{n,K}(y_0,x_0)=d_{n,K}(y_0,z_0)+d_{n,K}(z_0,x_0)=d_{n,W}(y_0,z_0)+d_{n,K}(z_0,x_0)$. Hence by \eqref{nv1}
\begin{equation}\label{nv2}
\begin{split}
    &u(x)\le \b+\l d_{n,W}(z_0,x)\qquad\forall x\in \pd W,\\
    &u(y_0)> \b+\l d_{n,W}(z_0,y_0),
    \end{split}
\end{equation}
where $\b=\a+\l  d_{n,K}(z_0,x_0)$. Since $u\in \AML(V^n)$,
by Prop. \ref{linear} $u$ is linear along $\g$ and therefore  $u(y_2)-u(y_0)=Ld_{n,W}(y_0,y_2)$. Hence by \eqref{nv2}
\begin{align*}
&\b+\l  d_{n,W}(z_0,y_2)\ge   u(y_2) = u(y_0)+L d_{n,W}(y_0,y_2)\\
&> \b+\l d_{n,W}(z_0,y_0)+L d_{n,W}(y_0,y_2)\\
&\ge\b+\l d_{n,W}(z_0,y_2)+(L-\l)d_{n,W}(y_0,y_2).
\end{align*}
Since $d_{n,W}(y_0,y_2)>0$, then $L< \l$. On the other hand, by \eqref{nv2} and $\Lip^n(u,\pd W)=\Lip^n(u,W)$,  we have $u(z_0)\le \b$ and
\[u(z_0)+Ld_{n,W}(y_0,z_0)\ge u(y_0)> \b+\l d_{n,W}(y_0,z_0)\]
and therefore $\l< L$, hence a contradiction to $W$ not empty.\\
To prove that (ii) implies (i), assume now that $u$ satisfies (ii) and set $L=\Lip^n(u,\pd K)$. Fix $x\in K$, then  by the CC property
\begin{equation}\label{e00}
  u(z)-Ld_{n,K}(x,z)\le  u(x) \le u(z)+Ld_{n,K}(x,z)
\end{equation}
for all   $z\in \pd K$. Set $J_i$ a connected component of $K\setminus \{x\}$, then $\pd J_i\subset \pd K\cup \{x\}$ and by \eqref{e00}
 $\Lip^n(u,\pd J_i)\le L$.   Again  by the CC property we have
\[
u(x)-Ld_{n,J_i}(x,y)\le  u(y) \le u(x)+Ld_{n,J_i}(x,y), \qquad \text{for all   $y\in J_i$.}
\]
Since $d_{n,J_i}(x,y)=d_{n,K}(x,y)$  we obtain $|u(x)-u(y)|\le Ld_{n,K}(x,y).$

As $K$ is connected, then for any $z_1\in J_1$ and  for any $z_2\in J_2$ (where $J_1$ and $J_2$ denote two different connected component of $K\setminus \{x\}$) we have $$d_{n,K}(z_1,z_2)= d_{n,J_1}(z_1,x)+d_{n,J_2}(x,z_2)$$
and therefore,
since $x$ is  arbitrary in $K$, $\Lip^n(u,K)=L$.
We prove that   (iii) implies  (ii).
If $u$ is infinity harmonic in $K$, $x_0\in \pd K$,
$\a\in\R$ and $\l\ge 0$ are such that
\[u(y)\le \a +\l d_{n,K}(x_0,y),\qquad\forall y\in \pd K ,\]
then by Prop. \ref{comparison} and Prop.\ref{distance}, it immediately follows that
\[u(x)\le \a +\l d_{n,K}(x_0,x),\qquad\forall x\in  K.\]
Similarly for the other relation  in  the definition of the CC property.\\
To show that (ii) implies (iii), assume by contradiction that there exists $x\in V^n\setminus V^0$ such that
\begin{equation}\label{contr1}
\D_{\infty}^n u(x)=2\eta>0.
\end{equation} Consider the  set  $K=\{x\}$.  Hence $\pd K=\{y_i\}_{i=1}^4$ where $y_i$ are the four  points adjacent to $x$ in $V^n$.
Let $y_1$, $y_2\in \pd K$ be  the points where $u$ attains its maximum, respectively minimum, on $\pd K$ and $\l\ge 0$ be
such that $u(y_1)-u(y_2)=\l d_{n,K}(y_1,y_2)=2\l\d_n$.
Hence
\[
u(y_j)\ge u(y_2)=u(y_1)- \l d_{n,K}(y_1,y_2)=u(y_1)-\l d_{n,K}(y_1,y_j),\quad j=1,2,3,4,
\]
and therefore by the CC property in $K$
\begin{equation}\label{contr2}
  u(x)\ge u(y_1)-\l d_{n,K}(y_1,x)=u(y_1)-\l \d_n.
\end{equation}
By \eqref{contr1}, we have
\begin{align*}
    u(x)+\eta&=\frac{1}{2} \left( \min_{  i=1,\dots,4 }\{u(y_i)\}+\max_{ i=1,\dots,4 }\{u(y_i)\}\right)=\frac{u(y_1)+u(y_2)}{2}\\
    &=\frac{u(y_1)}{2}+\frac{u(y_1)-\l d_{n,K}(y_1,y_2)}{2}=u(y_1)-\l \d_n=u(y_1)-\l d_{n,K}(y_1,x)
\end{align*}
It follows that $u(x)+\eta = u(y_1)-\l d_{n,K}(y_1,x)$ and therefore a contradiction to \eqref{contr2}.\\
The equivalence between (iii) and (iv) is proved in \cite[Theorem 5]{o} observing that   $\Delta_\infty^n u(x)=0$ if and only if
 $u(x)$ is such that
\[
\cF^n(u,x)=\min_{t\in\R} \left\{ \max_{y\sim_n x} \f{|t-u(y)|}{\d_n}\right\},
\]
i.e. $u(x)$ minimizes the functional $I(t)=\max_{y\sim_n x} {|t-u(y)|}/{\d_n}$ .

\end{proof}
\begin{corollary}\label{uniqueness}
Given $g:V^0\to\R$ and defined $L_0=\max_{i,j=1,2,3}\{|g(q_i)-g(q_j)|\}$, where $\{q_1,q_2,q_3\}$ are the vertices of $V^0$,
then for any $n\in \N$  there exists a unique $u^n\in  \AML(V^n)$ such that $u=g$ on $V^0$.
\end{corollary}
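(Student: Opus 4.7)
The plan is to reduce the corollary to the equivalence established in Proposition \ref{equivalence} combined with the existence/uniqueness for the Dirichlet problem of the graph infinity Laplacian proved in Proposition \ref{comparison}(ii). Indeed, once one knows that membership in $\AML(V^n)$ is the same thing as being infinity harmonic in $V^n\setminus V^0$, prescribing $u=g$ on $V^0$ turns the corollary into a boundary value problem of precisely the type covered by Proposition \ref{comparison}(ii).

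Concretely, for $n\ge 1$ I would take $K=V^n\setminus V^0$ as the domain. Before invoking Proposition \ref{comparison}(ii) I need to check two structural facts about $K$: first, that $K$ is a connected subset of $V^n\setminus V^0$ in the sense of the paper, and second, that $\pd K=V^0$. The connectedness follows directly from the self-similar construction of the pre-fractal: any two non-corner vertices of $V^n$ can be joined by an edge-path that never needs to pass through one of the three corners $q_1,q_2,q_3$. The identification $\pd K=V^0$ is immediate as well, since for every $n\ge 1$ each corner $q_i$ is adjacent in $V^n$ to the non-corner vertex lying at distance $\d_n$ along each of the two sides of the initial simplex emanating from $q_i$, while by definition no other vertex of $V^n\setminus K$ exists.

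With these preliminaries in hand, Proposition \ref{comparison}(ii) applied to $K=V^n\setminus V^0$ with boundary datum $g$ on $\pd K=V^0$ produces one and only one function $u^n\colon V^n\to\R$ that is infinity harmonic in $K$ and equal to $g$ on $V^0$. The equivalence (i)$\Leftrightarrow$(iii) in Proposition \ref{equivalence} then identifies this $u^n$ with the unique element of $\AML(V^n)$ agreeing with $g$ on $V^0$. The remaining case $n=0$ is trivial, since $V^0\setminus V^0=\emptyset$ and so $u^0:=g$ vacuously belongs to $\AML(V^0)$.

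I do not anticipate any real obstacle: the argument is essentially a matter of packaging Proposition \ref{comparison}(ii) and Proposition \ref{equivalence} together, and the only verifications needed are the two structural observations on $K=V^n\setminus V^0$ mentioned above. The constant $L_0$ in the statement is preparatory for later sections; it coincides with $\Lip^n(g,\pd K)$ because any two vertices of $V^0$ are joined by a minimal $d_{n,K}$-path of length $1$, a fact that is not needed for the present existence/uniqueness proof but is immediate to record.
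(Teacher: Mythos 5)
Your proposal is correct and follows essentially the same route as the paper, which simply derives the corollary from Proposition \ref{comparison}(ii) (existence and uniqueness of the infinity harmonic function on the connected set $K=V^n\setminus V^0$ with boundary datum $g$ on $\pd K=V^0$) combined with the equivalence (i)$\Leftrightarrow$(iii) of Proposition \ref{equivalence}. The structural checks you spell out (connectedness of $V^n\setminus V^0$ and the identification $\pd K=V^0$) are left implicit in the paper but are exactly the right details to record.
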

\begin{proof}
The previous result  is an immediate consequence of Prop. \ref{comparison}
and Prop. \ref{equivalence}.
\end{proof}

\begin{remark}
The definition of   Lipschitz constant at the boundary,  which
is computed considering paths staying inside the domain, see \eqref{slope_boundary},
rules out  the pathological example considered in \cite[Example 2.2]{j} where a    function defined on the boundary
does not have an absolutely minimizing extension. \\
Note that  the theory developed in this section applies to a generic graph.
Let  $(X,d)$ be the metric space with $X=\{x,y,z\}$
and $d(x,y)=3/2$, $d(x,z)=d(y,z)=1$. Consider $K=\{z\}$, hence $\pd K=\{x,y\}$,  and $f :\pd K\to \R$ defined
by $f(x)=0$, $f(y)=1$.  Hence  $\Lip(f,\pd K)= 1/2$ (consider the path inside $K$ given by $x_0=y$,
$x_1=z$, $x_2=x$). If $u$ is infinity harmonic  in $K$, then $u(z)=\max\{u(x),u(y)\}/2+ \min\{u(x),u(y)\}/2=1/2$
and $\Lip(u,K)=\Lip(u,\pd K)=1/2$.\\
If   the Lipschitz constant of $f$  at the boundary is computed as $|f(y)-f(x)|/d(x,y)=2/3$,
then  it is shown in \cite{j} that an absolutely minimizing Lipschitz extension of $f$ to $X$ does not exist.
\end{remark}


\section{The Absolutely Minimizing Lipschitz Extension problem in    $\cS$}\label{sec4}
Following \cite{acj,j,js}, we introduce absolute minimizing Lipschitz extensions on $\cS$. We consider
the  length  space $(\cS,d)$    where $d$ is the path distance defined by
 \[
   d(x,y):=\inf\left\{\ell(\g):\,\text{$\g$ is a path joining $x$ to $y$ }\right\}
 \]
with $\ell(\g)$   the length of $\g$ (see \cite{hs}). We consider as boundary of $\cS$, as for all the sets $V^n$,  the initial set
$V^0=\{q_1,q_2,q_3\}$ and we assume that a function $g:V^0\to\R$ is given.\\
Given   $A\subset \cS$ and  $f:A\to\R$, we define
 the Lipschitz constant of $f$ on $A$ to be
 \[
  \Lip(f,A):=\sup_{x,y\in \overline A, x\neq y}\frac{|f(y)-f(x)|}{d(x,y)}
 \]
\begin{definition}\hfill
 \begin{itemize}
    \item  A continuous function $u:\cS\to\R$ is said an \emph{absolute minimizer for the functional $\Lip$} on $\cS$ if for any  proper, open, connected  set $A\subset \cS\setminus V^0$  and for any $v:\cS\to\R$ such that $u=v$ on $\pd A$, then $\Lip(u,  A)\le \Lip(v,  A)$.
        We denote by $\AML(\cS)$ the   set of  the  absolute minimizer for $\Lip$ in $\cS$.
    \item A function $u:\cS\to\R$ satisfies the Comparison with Cones  property (noted CC property)  if for any  proper, any  proper, open, connected  set $A\subset \cS\setminus V^0,$ for any $x_0\in  \cS\setminus A$, $\l\ge 0$ and  $\a\in\R$
           \begin{align*}
            u\le \l d (x_0,\cdot)+\a\quad \text{on $\partial A$ implies}\quad u\le \l d (x_0,\cdot)+\a \quad\text{on $A$}\\
            u\ge -\l d (x_0,\cdot)+\a\quad \text{on $\partial A$ implies}\quad u\ge - \l d (x_0,\cdot)+\a \quad\text{on $A$}
           \end{align*}
   \end{itemize}
\end{definition}
In the following proposition we summarize the   results in \cite{j,js} concerning the existence of AMLE in metric spaces which in particular applies to $(\cS,d)$.
\begin{proposition} \label{ext_Sierpinski}\hfill
\begin{itemize}
\item[(i)] A function $u$ is of class $\AML(\cS)$ if and and only if satisfies the Comparison with Cones property.
\item[(ii)] For any given $g:V^0\to\R$, there exists a unique  function $u\in \AML(\cS)$ such that $u=g$ on $V^0$.
\end{itemize}
\end{proposition}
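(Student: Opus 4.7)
The proposition is essentially a transfer statement: results of \cite{j,js} for AMLE in abstract length spaces are to be invoked in the specific space $(\cS,d)$. My strategy is therefore to verify the hypotheses of those abstract theorems and then quote them. The first step is to check that $(\cS,d)$ meets the required structural requirements: $\cS$ is a compact, connected, separable metric space (it is the closure of a bounded subset of $\R^2$), the path distance $d$ makes it a length space, and in fact a geodesic one because minimal paths realizing the infimum in the definition of $d$ exist by compactness and lower semicontinuity of length; in addition $V^0\subset\cS$ is a nonempty closed set whose complement is nonempty and connected. Most of these facts are already used in the paper via \cite{ccm,hs}.

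For part (i), the equivalence of $\AML$ and the CC property in length spaces is proved in \cite{j,js}, and I would reproduce the argument at the level of a sketch. The implication CC $\Rightarrow$ $\AML$ proceeds by fixing a proper open connected $A\subset\cS\setminus V^0$ and a competitor $v$ with $v=u$ on $\pd A$; setting $L=\Lip(v,A)$, the bounds $u(y)-Ld(\cdot,y)\le v\le u(y)+Ld(\cdot,y)$ hold on $\pd A$ for each $y\in\pd A$, and CC transports them inside $A$, yielding $\Lip(u,A)\le L$. Conversely, if CC fails one produces a region on which $u$ strictly exceeds (or falls below) the comparison cone, and replacing $u$ there by the cone gives a competitor with strictly smaller Lipschitz constant, contradicting $\AML$. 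The argument parallels (and borrows from) the proof of Proposition \ref{equivalence} that I have already worked through in the pre-fractal setting, with $d_{n,K}$ replaced by $d$ and sums replaced by path integrals.

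For part (ii), uniqueness follows from (i) via the standard observation that two AMLE extensions $u_1$, $u_2$ of the same boundary data can be sandwiched between the same cones; if $u_1\neq u_2$ somewhere, one of them violates CC against the other. Existence is obtained by Perron's method: take the upper envelope of all continuous functions with boundary value $\le g$ that satisfy the CC-from-below condition; the envelope inherits CC on both sides and is continuous up to $V^0$ by comparison with the McShane and Whitney cones built from $g$. Alternatively — and this is the route actually taken in Section \ref{sec4} — existence can be read off from the convergence of the pre-fractal AMLEs $u^n$ produced by Corollary \ref{uniqueness}, once one shows that $\{u^n\}$ is equicontinuous and that every limit point satisfies the CC property on $\cS$.

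The main obstacle is not any single hard inequality but the careful translation between the discrete/path-distance setting on $V^n$ and the continuous geodesic setting on $\cS$: one must know that $d_{n,K}$ converges, along exhausting families $K_n\subset V^n$, to $d$ on the relevant subsets of $\cS$ in a way strong enough to pass CC to the limit. This is exactly where the results on convergence of path distances from \cite{ccm,hs} are used, and it is the only step that is not a direct quotation from \cite{j,js}.
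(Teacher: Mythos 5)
Your overall strategy matches the paper's: Proposition \ref{ext_Sierpinski} is proved there purely by citation, namely \cite[Proposition 4.1]{js} for (i), \cite[Theorem 4.3]{j} for existence in (ii), and \cite[Theorem 1.4]{pssw} for uniqueness in (ii). Your verification that $(\cS,d)$ is a compact geodesic length space, and your sketches of the CC $\Leftrightarrow$ AMLE equivalence and of existence (either by Perron's method or by extracting a CC-satisfying limit of the pre-fractal AMLEs $u^n$), are consistent with that route.

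The genuine gap is in your uniqueness argument for (ii). You claim that uniqueness ``follows from (i) via the standard observation that two AMLE extensions $u_1$, $u_2$ of the same boundary data can be sandwiched between the same cones; if $u_1\neq u_2$ somewhere, one of them violates CC against the other.'' This step fails: the Comparison with Cones property only lets you compare an AMLE with cone functions $\pm\lambda d(x_0,\cdot)+\alpha$, not with another AMLE, and being squeezed between the same McShane and Whitney envelopes does not force $u_1=u_2$ --- the whole point of the theory is that there are many Lipschitz extensions between those two envelopes, all compatible with the same cone bounds on $\pd A$. Uniqueness of AMLE is a genuinely deep theorem: in $\R^n$ it required Jensen's PDE argument, and in length spaces such as $(\cS,d)$ it is exactly the content of \cite[Theorem 1.4]{pssw}, proved via the tug-of-war game; neither \cite{j} nor \cite{js} contains it. Since your proposal never invokes \cite{pssw} and offers no substitute argument, the uniqueness claim in (ii) is unproved as written. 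The rest of your outline (the structural hypotheses on $(\cS,d)$, part (i), and existence) is sound and in line with the paper.
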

 \begin{proof}
For the proof of (i), see \cite[Proposition 4.1]{js},  for the one of (ii), we refer to   \cite[Theorem 4.3]{j} for existence and
to \cite[Theorem 1.4]{pssw} for the uniqueness.
\end{proof}
\begin{theorem}\label{thm_pre}
Given $g:V^0\to \R$,   let $u^n$ be the $\AML(V^n)$  of $g$ to $V^n$. Then
\[
   \lim_{n\to \infty}u^n(x)=u(x),\qquad\text{uniformly in $\cS$},
\]
i.e. $\lim_{n\to\infty}\sup_{x\in V^n}|u^n(x)-u(x)|=0$, where $u$ is  the  $\AML(\cS)$  of  $g$ to $\cS$.
\end{theorem}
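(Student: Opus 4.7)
The plan is to show equiboundedness and equicontinuity of $\{u^n\}$, extract a uniformly convergent subsequence, verify that the limit lies in $\AML(\cS)$, and conclude by the uniqueness part of Prop.\ \ref{ext_Sierpinski}. First I would observe that, by Corollary \ref{uniqueness}, each $u^n$ is the unique element of $\AML(V^n)$ with boundary datum $g$ on $V^0$; applying the absolute minimizing property to $K=V^n\setminus V^0$ against the McShane--Whitney extension of $g$ gives $\Lip^n(u^n,V^n\setminus V^0)=\Lip^n(g,V^0)=L_0$, while the comparison principle (Prop.\ \ref{comparison}) yields $\min_{V^0}g\le u^n\le\max_{V^0}g$ uniformly in $n$. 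The crucial analytical input is the convergence of the pre-fractal path distances to the intrinsic length distance on $\cS$: for any $x,y\in V^\infty$ one has $d_n(x,y)\to d(x,y)$ as $n\to\infty$, with $d_n\ge d$ (see \cite{ccm,hs}). Combined with $|u^n(x)-u^n(y)|\le L_0\,d_n(x,y)$ this yields equicontinuity of $\{u^n\}$ with respect to $d$.

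Next I would extract a uniform limit. Since $V^\infty=\bigcup_n V^n$ is countable and $V^n\subset V^{n+1}$, a diagonal/Arzel\`a--Ascoli argument produces a subsequence $u^{n_k}$ converging pointwise on $V^\infty$, and the uniform Lipschitz estimate allows the limit to be extended to a continuous function $\tilde u:\cS\to\R$ satisfying $\tilde u=g$ on $V^0$, $\Lip(\tilde u,\cS)\le L_0$, and $\sup_{x\in V^{n_k}}|u^{n_k}(x)-\tilde u(x)|\to 0$.

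The heart of the argument is to prove $\tilde u\in\AML(\cS)$, which by Prop.\ \ref{ext_Sierpinski}(i) reduces to the Comparison with Cones property on $\cS$. Fix a proper open connected $A\subset\cS\setminus V^0$, $x_0\in\cS\setminus A$, $\lambda\ge 0$, $\alpha\in\R$ and assume $\tilde u\le\lambda d(x_0,\cdot)+\alpha$ on $\partial A$. Given $\varepsilon>0$, I would choose $x_0^n\in V^n$ with $x_0^n\to x_0$ and take $K_n$ to be (a connected component of) the set $\{y\in V^n\setminus V^0:y\in A\}$; the uniform convergence $u^n\to\tilde u$, the closeness of $\partial K_n$ to $\partial A$, and the convergence of the discrete distances then ensure that the cone bound on $\partial A$ transfers, for $n$ large, to $u^n\le(\lambda+\varepsilon)d_{n,K_n}(x_0^n,\cdot)+\alpha+\varepsilon$ on $\partial K_n$. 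By the discrete CC property (Prop.\ \ref{equivalence}(ii)) this inequality propagates to $K_n$; passing $n\to\infty$ and then $\varepsilon\to 0$ gives $\tilde u\le\lambda d(x_0,\cdot)+\alpha$ on $A$. The reverse cone inequality is analogous.

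Finally, the uniqueness in Prop.\ \ref{ext_Sierpinski}(ii) forces $\tilde u=u$; since every subsequence of $\{u^n\}$ admits a further subsequence with the same limit $u$, the full sequence converges uniformly. The main obstacle is in the third paragraph: the discrete CC property uses the restricted distance $d_{n,K_n}$, whereas the Sierpinski CC uses the unrestricted $d(x_0,\cdot)$ with $x_0\in\cS\setminus A$ (not necessarily in $\partial K_n$). Matching these up requires a careful selection of the base point $x_0^n$ and of the approximating set $K_n$, together with the fact that geodesics for $d$ relevant to the cone inequality can be approximated by paths lying inside $K_n$ for $n$ large, again thanks to the convergence of the path distances \cite{ccm,hs}.
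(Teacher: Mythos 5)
Your overall architecture coincides with the paper's: uniform bound plus the equi-Lipschitz estimate $|u^n(x)-u^n(y)|\le L_0\,d_n(x,y)$ with $d_n\ge d$, Arzel\`a--Ascoli, verification of the Comparison with Cones property for the limit, and then uniqueness from Prop.~\ref{ext_Sierpinski} to upgrade subsequential to full convergence. The first, second and fourth paragraphs of your proposal are fine. The problem is precisely the step you yourself flag in the third paragraph: you do not actually carry out the transfer of the cone inequality, and the ``direct'' transfer you sketch does not go through as stated. Two distinct obstructions arise. First, the discrete CC property (Definition~\ref{maindef}(ii)) only admits base points $x_0\in\partial K$, whereas your $x_0^n$ approximates a point of $\cS\setminus A$ that need not be anywhere near $\partial K_n$; re-basing the cone at a point of $\partial K_n$ changes the cone function and this must be justified. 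Second, and more seriously, even after applying the discrete CC property you obtain $u^n\le \beta_n+\lambda\, d_{n,K_n}(z_n,\cdot)$ on $K_n$ with the \emph{restricted} distance, and since $d_{n,K_n}\ge d_n\ge d$ this inequality is too weak to pass to the limit and recover $\tilde u\le \alpha+\lambda d(x_0,\cdot)$ at a generic point of $A$: a point $y\in A$ may be reachable from $z_n$ inside $K_n$ only by a long detour, so $d_{n,K_n}(z_n,y)$ need not converge to $d(x_0,y)$ minus the re-basing offset. Your appeal to ``geodesics approximated by paths inside $K_n$'' is exactly the assertion that fails for general $y$ and general base points.

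The paper circumvents both difficulties by arguing by contradiction on the violation set $W=\{y\in A: u(y)>\alpha+\lambda d(x_0,y)\}$ and localizing at a \emph{single} point $y_0\in W$ where the cone inequality fails by a definite amount $4\eta$. After approximating $y_0$ and $x_0$ by lattice points $y_0^*$, $x_0^*$, it takes a minimal discrete path $\gamma_n$ from $x_0^*$ to $y_0^*$ and re-bases the cone at the exit point $z_n$ of $\gamma_n$ from $W$, so that $z_n\in\partial W_n$, the cone bound on $\partial W_n$ holds with vertex $z_n$ (here the inequality $d_n\le d_{n,W_n}$ works in the favorable direction), and crucially the tail of $\gamma_n$ from $z_n$ to $y_0^*$ is itself an admissible path for the restricted distance, giving the reverse control $d_{n,W_n}(z_n,y_0^*)\le d_n(z_n,y_0^*)$ needed to contradict \eqref{conv19} in the limit. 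This choice of $z_n$ on a geodesic to the specific target point is the idea missing from your proposal; without it, or some substitute for it, the third paragraph is a genuine gap rather than a routine verification.
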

\begin{proof}
Since the sequence $\{u^n\}_{n\in\N}$ is uniformly bounded and equi-Lipschitz continuous, there exists a function  $u:\cS\to\R$ such that, up to a subsequence,  $\lim_{n\to \infty}u^n(x)=u(x)$ and  the convergence is also uniform. Moreover $u$ is Lipschitz continuous with Lipschitz constant $L$.
We prove that $u$ satisfies the CC property on $\cS$. Given a proper, open set  any  proper, open, connected  set $A\subset \cS\setminus V^0,$   let $x_0\in \cS\setminus A$, $\a\in\R$, $\l> 0$ (if $\l=0$ the argument is similar) be such that
\[
    u(y)\le \a+\l d(x_0,y),\qquad \forall y\in \pd A,
\]
and assume by contradiction that the set $W=\{y\in A:\,u(y)>\a+\l d(x_0,y) \}$ is not empty
(we   assume that $W$ is connected otherwise we consider a connected component of $W$). Observe that
\[
    u(y)=\a+\l d(x_0,y)\qquad  \forall y\in \pd W.
\]
Let $y_0\in W$ and $\eta>0$ be such that
\begin{equation}\label{conv15}
u(y_0)\ge\a+\l d(x_0,y_0)+ 4\eta.
\end{equation}
Defined  $W_n=W\cap V^n, $  let $\varepsilon=\frac{\eta}{L+\lambda} .$  As $W$ is open, there exists  an integer $n_0$
  and $y_0^*\in W_{n_0} $  such that $d(y_0, y_0^*)<\varepsilon.$
 Since
 $W_n\subset W_{n+1}$,  then $y_0^*\in W_{n} $  for any $n\geq n_0.$

Therefore by \eqref{conv15} and the continuity of $u$, we have
\[
  u(y_0^*)\ge  u(y_0) -L \varepsilon \ge \a+\l d(x_0,y_0^*)+3 \eta.
\]

%
%

Similarly, there exists a point $x^*_0 \in V^{n_1}$ such that  $d(x_0, x_0^*)<\frac{\eta}{2\lambda}:$
then
$$u(y^*_0)\ge \a+\l d(x_0,y_0^*)+3 \eta
  \ge \a +\l (d(y^*_0,x_0^*)+ d(x_0,x_0^*)-2d(x_0,x_0^*) ) +3\eta   \ge \a +\l (d(y^*_0,x_0^*)+ d(x_0,x_0^*)) +2\eta.
 $$
Defined $\beta=\a+\l d(x_0,x_0^*),$
we have
$$u(y^*_0)\ge \beta +\l d(y^*_0,x_0^*)+2\eta.
 $$
Moreover
$$u(y) \le\b+\l d(x^*_0,y)\qquad \forall y\in A \setminus W.$$
Let $\g_n\subset V^n$ be a path $\{x^*_0= x^n_0,x^n_1,\dots, x^n_{N}=y^*_0\}$  with $x^n_i\sim_n x^n_{i+1}$, $i=0,\dots,N-1$, connecting $x^*_0$ to $y^*_0$ such that
$d_n(x^*_0,y^*_0)=\ell(\g_n)$. Since $d_n(x^*_0,y^*_0)\to d(x^*_0,y^*_0)$ for $n\to \infty$
(see for example \cite[Corollary 5.1]{ccm}, \cite{hs}), then
 \[
 d(x^*_0,y^*_0)\ge \ell(\g_n)-\frac{\eta}{\l}
 \]
 for $n$ sufficiently large, where $\ell(\g_n)$ is the length of $\g_n$. Denoted by $z_n$ the first point $x^n_i\in  \g_n$, $i=1,\dots, N-1$,
such that $x^n_{i}\not\in W$,
$$d_n(x^*_0,y^*_0)= d_n(x^*_0,z_n)+d_n(z_n,y^*_0).$$
and by (4.4)
$$d(x^*_0,y^*_0)\ge d_n(x^*_0,z_n)+d_n(z_n,y^*_0)-\frac{\eta}{\l}.$$
Set $\b_n=\b+\l d_n(x^*_0,z_n)$, then
\begin{align}
&u(y) \le\b+\l d(x^*_0,y)\le \b+\l d_n(x^*_0,y)\le \b+\l d_n(x^*_0,z_n)+\l d_n(z_n,y)=\b_n+\l d_n(z_n,y) \qquad  \forall y\in A \setminus W, \label{conv18}\\
&u(y^*_0)\ge \beta+\l d(x^*_0,y^*_0)+2\eta\ge \b+\l d_n(x^*_0,z_n)+\l d_n (z_n,y^*_0)-\eta+2\eta =\b_n+\l d_n(z_n,y^*_0)+\eta.\label{conv19}
\end{align}
For any $y\in \pd W_n$ we have in particular that $y\in A \setminus W$ and
by the uniform convergence of $u^n$ to $u$ and  \eqref{conv18},    there exists $\e_n\to 0$ for $n\to\infty$ such that
\[
    u^n(y)\le \b_n+\e_n+\l d_{n, W^n}(z_n,y)\qquad \forall y\in \pd W_n.
\]
Then  by the CC property for $u^n$, we get (in particular)
\begin{equation}\label{conv6}
    u^n(y^*_0)\le \b_n+\e_n+\l d_{n, W^n}(z_n,y^*_0)\qquad.
\end{equation}
Passing to the limit for $n\to\infty$  in \eqref{conv6} we get a contradiction to \eqref{conv19}.
In fact $$ u(y^*_0)\le \liminf(\b_n+2 \e_n+\l d_{n, W^n}(z_n,y^*_0)  )$$
and by definition of $z_n,$  we have $d_{n, W^n}(z_n,y^*_0) \leq  d_{n}(z_n,y^*_0)$
so $$ u(y^*_0)\le \liminf(\b_n+2 \e_n+\l d_{n}(z_n,y^*_0)  )=\liminf(\b_n+\l d_{n}(z_n,y^*_0)  )$$
while from \eqref{conv19} we can deduce
$$ u(y^*_0)\ge \limsup(\b_n+\l d_{n}(z_n,y^*_0)  )+\eta$$
Arguing in a similar way for the other relation, we conclude that $u$  satisfies the CC property on $\cS$ and
therefore $u\in\AML(\cS)$.  By the uniqueness of $u$, see Prop. \ref{ext_Sierpinski}, we get that all the
sequence $u^n$ converges uniformly to $u$.
\end{proof}
\begin{definition}
We say that a function $u:\cS\to\R$ is infinity harmonic if it is the limit of infinity harmonic functions $u^n:V^n\to \R$
such that $u^n=u$ on $V^0$.
\end{definition}
By Theorem \ref{thm_pre}, we have
\begin{corollary}
The following properties are equivalent
\begin{itemize}
\item[(i)] $u\in\AML(\cS)$;
\item[(ii)]  $u$ satisfies  the Comparison with Cones property;
\item[(iii)] $u$ is infinity harmonic.
\end{itemize}
\end{corollary}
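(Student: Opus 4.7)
The strategy is to chain together results already established in the paper: Proposition \ref{ext_Sierpinski}(i) takes care of the equivalence (i) $\Leftrightarrow$ (ii) with no further argument, so the real content is (i) $\Leftrightarrow$ (iii). Both directions are handled by matching the sequence $(u^n)$ that realises infinity harmonicity on $\cS$ with the approximating sequence of Theorem \ref{thm_pre}, and then invoking uniqueness of the AMLE at the pre-fractal and at the fractal level.

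For (i) $\Rightarrow$ (iii): assuming $u\in\AML(\cS)$, I would set $g:=u|_{V^0}$ and, for every $n\in\N$, let $u^n$ denote the unique element of $\AML(V^n)$ agreeing with $g$ on $V^0$, produced by Corollary \ref{uniqueness}. By the pre-fractal equivalence (Proposition \ref{equivalence}), each $u^n$ is infinity harmonic on $V^n\setminus V^0$. Theorem \ref{thm_pre} then gives $u^n\to u^{\ast}$ uniformly on $\cS$, where $u^{\ast}$ is the $\AML(\cS)$-extension of $g$; by the uniqueness statement in Proposition \ref{ext_Sierpinski}(ii), $u^{\ast}=u$. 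Since $u^n=u$ on $V^0$ by construction, the definition of infinity harmonic function on $\cS$ is satisfied.

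For (iii) $\Rightarrow$ (i): suppose $u$ is infinity harmonic, so there is a sequence $u^n:V^n\to\R$ of functions that are infinity harmonic on $V^n\setminus V^0$, satisfy $u^n=u$ on $V^0$, and converge to $u$. Proposition \ref{equivalence} upgrades each $u^n$ to an element of $\AML(V^n)$; since Corollary \ref{uniqueness} identifies such an AMLE uniquely from its boundary values $g:=u|_{V^0}$, the sequence $(u^n)$ is precisely the one considered in Theorem \ref{thm_pre}. That theorem therefore yields $u^n\to v$ uniformly, where $v$ is the (unique) $\AML(\cS)$-extension of $g$. By uniqueness of limits, $u=v\in\AML(\cS)$.

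I do not expect any serious obstacle: the proof is essentially a bookkeeping exercise that glues together Proposition \ref{equivalence}, Corollary \ref{uniqueness}, Theorem \ref{thm_pre} and Proposition \ref{ext_Sierpinski}. The only point that deserves explicit attention is the identification, in both directions, of the sequence arising from the definition of infinity harmonicity with the AMLE sequence of Theorem \ref{thm_pre}; once this match is made via uniqueness of the pre-fractal AMLE extending prescribed boundary data, the conclusion is automatic.
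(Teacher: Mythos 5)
Your proposal is correct and follows essentially the same route as the paper, which proves the corollary simply by invoking Theorem \ref{thm_pre} together with Propositions \ref{equivalence} and \ref{ext_Sierpinski} and Corollary \ref{uniqueness}; you have merely made explicit the bookkeeping (identifying, via uniqueness of the pre-fractal AMLE with given boundary data, the sequence in the definition of infinity harmonicity with the sequence of Theorem \ref{thm_pre}) that the paper leaves implicit.
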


\begin{remark}
As for the Laplacian on the Sierpinski gasket (see \cite{s}), one would be tempted to say that, given $f:V^{n-1} \to\R$, $\tilde f$ is an infinity harmonic
extension of $f$ to $V^n$ if $\tilde f\in \AML(V^n)$  and $\tilde f=f$ on $V^{n-1}$. But a function $\tilde f$ with such property
could   not exist. This can be seen with the following   example. Let $g:V^0\to\R$
such that $g(q_1)=0$, $g(q_2)= e\in[0,1/7]$ and $g(q_3)=1$ and denote  by $q_{ij}=q_{ji}$, $i,j=1,2,3$ and $i\neq j$, the point in $V^1\setminus V^0$ on the segment of vertices $q_i$ and $q_j$.  By the Lazarus algorithm  (see Section \ref{lazarus})  the unique solution of $\D_\infty^1 u(x)=0$ in $V^1\setminus V^0$ and $u=g$ on $V^0$
 is given by $u^1(q_{12})=(1+e)/4$ and $u^1(q_{23})=(1+e)/2$, $u^1(q_{13})=1/2$.\\
If we consider the problem  $\D_\infty^2 u(x)=0$ in $V^2\setminus V^0$ and $u=g$ on $V^0$, by applying again the Lazarus algorithm we find $u^2(q_{12})=(3+4e)/12$ and therefore $u^2(q_{12})\neq u^1(q_{12})$ if $e\neq 0$. Hence the function $u^2$ does not satisfy $\D_\infty^1 u^2(x)=0$ in $V^1$, since  otherwise $u^2\equiv u^1$ on $V^1$. Note that the set $K=V^1\setminus V^0$ is not connected as a subset of $V^2$ and $\pd K=V^2\setminus V^0$, hence    the
values of $u^2$  on $V^2$ are used to compute $\D_\infty^2 u^2(x)=0$ at  $x\in V^1$.\\
 This is a main  difference with the case of the Laplacian where the harmonic extension $\tilde f$ of a function $f$ to $V^n$ still satisfies the Laplace equation on $V^{n-1}$. The main consequence of this observation is that we cannot define an infinity harmonic function on $\cS$ as a continuous function whose restriction  to $V^n$ is infinity harmonic for all $n\in \N$.
\end{remark}
The following property   can be useful in order to characterize the limit of the sequence $u^n$.
\begin{proposition}
For $u:\cS \to\R$,  the functional
\[
\cF^n(u,V^n)=:\max_{x\in V^n\setminus V^0}\cF^n(u,x)
\]
 is increasing in $n\in \N$.
\end{proposition}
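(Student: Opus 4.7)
The plan is to pick a maximizing vertex at level $n$, use the fact that every $n$-edge is subdivided into two $(n{+}1)$-edges by introducing a midpoint, and apply the triangle inequality together with the elementary bound $(a+b)/2\le\max(a,b)$.

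More precisely, let $x^\ast\in V^n\setminus V^0$ be a point at which the maximum defining $\cF^n(u,V^n)$ is attained, and pick $y^\ast\sim_n x^\ast$ realizing
$$\cF^n(u,x^\ast)=\frac{|u(x^\ast)-u(y^\ast)|}{\d_n}.$$
The edge $\{x^\ast,y^\ast\}$ in $V^n$ is, by construction of $V^{n+1}=\cup_i\psi_i(V^n)$, subdivided by a midpoint $m^\ast\in V^{n+1}$ with $x^\ast\sim_{n+1}m^\ast\sim_{n+1}y^\ast$ and $d_{n+1}(x^\ast,m^\ast)=d_{n+1}(m^\ast,y^\ast)=\d_{n+1}=\d_n/2$. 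I would check that $m^\ast\in V^{n+1}\setminus V^0$: this is clear since $m^\ast$ is the midpoint of an $n$-edge and hence cannot coincide with any of the three corners $q_1,q_2,q_3$ of $V^0$ (and $x^\ast\in V^n\setminus V^0\subset V^{n+1}\setminus V^0$ by assumption).

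Next I would estimate, using the triangle inequality and $(a+b)/2\le\max(a,b)$,
\begin{align*}
\cF^n(u,x^\ast)
&=\frac{|u(x^\ast)-u(y^\ast)|}{\d_n}
\le\frac{|u(x^\ast)-u(m^\ast)|+|u(m^\ast)-u(y^\ast)|}{2\d_{n+1}}\\
&\le\max\!\left\{\frac{|u(x^\ast)-u(m^\ast)|}{\d_{n+1}},\,\frac{|u(m^\ast)-u(y^\ast)|}{\d_{n+1}}\right\}.
\end{align*}
Since $x^\ast\sim_{n+1}m^\ast$ and $m^\ast\sim_{n+1}y^\ast$, the two quantities on the right are bounded by $\cF^{n+1}(u,x^\ast)$ and $\cF^{n+1}(u,m^\ast)$ respectively. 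Because both $x^\ast$ and $m^\ast$ lie in $V^{n+1}\setminus V^0$, each of these is in turn $\le\cF^{n+1}(u,V^{n+1})$. Therefore
$$\cF^n(u,V^n)=\cF^n(u,x^\ast)\le\cF^{n+1}(u,V^{n+1}),$$
which is the desired monotonicity.

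There is really no serious obstacle here: the only point that requires a moment's attention is the verification that the newly introduced midpoint $m^\ast$ does not fall into $V^0$, so that it qualifies for the max defining $\cF^{n+1}(u,V^{n+1})$. Once this is noted, the estimate is a one-line consequence of triangle inequality plus $(a+b)/2\le\max(a,b)$, applied to the two half-edges produced by one step of refinement.
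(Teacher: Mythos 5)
Your proof is correct and follows essentially the same route as the paper: pick the maximizing vertex and the neighbor realizing $\cF^n$, insert the midpoint of that edge in $V^{n+1}$, and conclude by the triangle inequality (the paper bounds both half-edge quotients by $\cF^{n+1}(u,z)$ at the midpoint and averages, while you take the max over the two endpoints --- an immaterial difference). Your additional check that the midpoint does not lie in $V^0$ is a sensible detail the paper leaves implicit.
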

\begin{proof}
Let $x\in V^n\setminus V^0$  be  such that  $\cF^n(u,V^n)= \cF^n(u,x)$,   $y\in V^n$   such that $\cF^n(u,x)=|u(x)-u(y)|/\d_n$
and $z$ the point in $V^{n+1}$ in between $x$ and $y$. Then
\begin{align*}
    \frac{|u(x)-u(y)|}{\d_n} \le \half \frac{|u(x)-u(z)|}{\d_{n+1}}+\half \frac{|u(z)-u(y)|}{\d_{n+1}}\\
    \le \half \cF^{n+1}(u,z)+\half \cF^{n+1}(u,z)\le \cF^{n+1}(u,V^{n+1})
\end{align*}
hence $\cF^n(u,V^n)\le \cF^{n+1}(u,V^{n+1})$.
\end{proof}

\section{Some complements to the AMLE problem in $V^n$ }\label{sec5}
We discuss in this section two further  properties of   the AMLE problem in $V^n$:
\begin{itemize}
  \item an algorithm to compute explicitly an infinity harmonic function on $V^n$;
  \item the relation between $p$-harmonic   and infinity harmonic functions on $V^n$.
\end{itemize}
\subsection{A constructive approach: The Lazurus algorithm}\label{lazarus}
We describe   an algorithm introduced  in \cite{l} and extensively studied  in \cite{g} which allows to compute  an infinity harmonic function  on $V^n$. \\
Observing that the set of  infinity harmonic functions
is invariant by   addition and multiplication for constants,    if $u$ is infinity harmonic, then
\begin{equation}\label{laz1}
v(x)=\f{u(x)-\min_{ V^0}\{ u\}}{\max_{ V^0}\{ u\}-\min_{ V^0}\{ u\}}
\end{equation}
is also infinity harmonic. Moreover $v$ assumes   the boundary values $0$, $e$, $1$ for  some $e\in [0,1]$. Since the
values at the boundary determine univocally an infinity harmonic function,   to compute $u$ is sufficient to
consider the case of the boundary values $0$, $e$, $1$ and then   to inverte the affine transformation \eqref{laz1}. It is possible to further
reduce the computation  by considering $e\in [0,1/2]$. In fact if $e\in [1/2,1]$, then
$w(x)=1-v(x)$  is infinity harmonic and  assume the boundary values $0$, $1-e\in [0,1/2]$, $1$ (obviously the order of the vertices of $V^0$
where these values are assumed is not relevant). Computed $w$,   we have $v(x)=1-w(x)$. \par
We start with  the boundary values $v(q_1)=0$ $v(q_2)= e$, $v(q_3)= 1$ (see figure \ref{fig1}.(a)) and we compute the corresponding infinity harmonic function on $V^1$. We   denote  by $q_{ij}=q_{ji}$, $i,j=1,2,3$ and $i\neq j$, the point in $V^1\setminus V^0$ on the segment of vertices $q_i$ and $q_j$.
Exploiting   Prop.\ref{linear}, since
$$\Lip^1(v,\pd (V^1\setminus V^0) )=\f{v(q_3)-v(q_1)}{d_1(q_1,q_3)}=1,$$
 the function $v$ is linear
along the minimal path $\{q_1,q_{13}, q_3\}$ for $d_1(q_1,q_3)$,  hence $v(q_{13})=1/2$.\\ To compute the other values we
consider the connected set $K= \{q_{12},q_{23}\}$ with $\pd K=V^0\cup\{q_{13} \}$.
We distinguish two cases:\par
\emph{(i):} If $e\in [1/3,1/2]$, then
$$\Lip^1(v,\pd K )=\f{v(q_3)-v(q_1)}{d_{1,K}(q_1,q_3)}=\f{1}{3/2}
$$
and a minimal path for $d_{1,K}(q_1,q_3)$ is given by $\{q_1,q_{12}, q_{23},q_3\}$. Applying again Prop. \ref{linear}, we get
$v(q_{12})=1/3$, $v(q_{23})=2/3$ (see figure \ref{fig1}.(b)).\par
\emph{(ii):} If $e\in [0,1/3]$, then
$$\Lip^1(v,\pd K )=\f{v(q_3)-v(q_2)}{d_{1,K}(q_2,q_3)}=\f{1-e}{1}
$$
and a minimal path for $d_{1,K}(q_2,q_3)$  is given by $\{q_2,  q_{23},q_3\}$. Hence $v(q_{23})=(1+e)/2$. To determine $v(q_{12})$, we consider
the connected  set $J=\{q_{12}\}$ with boundary $\pd J=\{q_1, q_{13}, q_{23}, q_2\}$. Since
$$ \Lip^1(v,\pd J )=\f{v(q_{23})-v(q_1)}{d_{1,K}(q_1,q_{23})}=\f{(1+e)/2}{1}
$$
and a  minimal path for $d_{1,K}(q_1,q_{23})$  is given by $\{q_1, q_{12},  q_{23}\}$, we get $v(q_{12})=(1+e)/4$ (see figure \ref{fig1}.(c)).\\
Arguing as above it is possible (in principle) to compute infinity harmonic function on $V^n$ for any $n\in \N$  (the case   $n=2$
is detailed in \cite{g}). Alternatively it is possible to use the iterative scheme  developed in \cite{o} in the framework of numerical
approximation of infinity harmonic functions in Euclidean space. Since this scheme works for general graph, it can be applied to $V^n$.\\
It is worth noticing that a somewhat stronger result   than Theorem \ref{thm_pre} is obtained  in \cite{g}. In fact it is proved that
the restriction of an infinity harmonic function $u^n$ on $V^n$   is eventually unchanging on $V^k$ for $n$ sufficiently
larger than  $k$.  This result is based on the explicit construction of the optimal paths  for the Lazarus algorithm and the argument is rather involved.


\begin{figure}[h!]\label{fig1}
\begin{center}
\begin{tabular}{ccc}
\includegraphics[height=3.5cm]{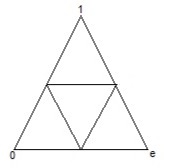}\quad &
\includegraphics[height=3.5cm]{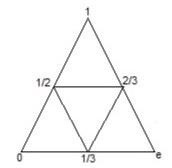}\quad&
\includegraphics[height=3.5cm]{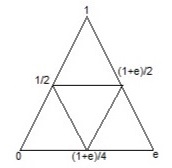}  \\
(a)& (b) & (c)
\end{tabular}
\end{center}
\caption{infinity harmonic function on $V^1$: (a) boundary condition, (b) $e\in [1/3,1/2]$, (c) $e\in [0,1/3]$}
\end{figure}

\subsection{Infinity and   $p$-harmonic function in $V^n$}
 Another important point of view of the  AMLE  theory  is  the relation between  $p$-harmonic  and infinity harmonic functions. In the Euclidean case  it can be proved that the limit of $p$-harmonic functions for $p\to \infty$ is an infinity harmonic function \cite{acj,c}, while this property may fail in general metric-measure spaces \cite{js}. We start to prove a similar result for the pre-fractal $V^n$.
Fixed $n\in\N$ and given $g:V^0\to\R$, we introduce for   $p\in [1,\infty)$      the $p$-energy functional
\begin{equation}\label{penergy}
 I^n_p(u)=\left(\sum_{x\in V^n}\sum_{y\sim_n x}\left|\f{u(y)-u(x)}{\d_n}\right|^p\right)^{\f 1 p}.
\end{equation}
A function which achieves the minimum in \eqref{penergy} is said a \emph{$p$-harmonic function} on $V^n$. The existence of a $p$-harmonic function on graphs  is studied in \cite{hso}.
\begin{proposition}\label{pconvergence}
Let $\{u^n_p\}_{p\ge 1}$ be  the family of the $p$-harmonic functions on $V^n$ such that $u^n_p=g$ on $V^0$. Then $u^n_p\to u^n$ for $p\to \infty$
where $u^n\in \AML(V^n)$ and $u^n=g$ on $V^0$.
\end{proposition}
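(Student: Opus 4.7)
The plan is to show that any subsequential limit of $\{u^n_p\}_{p\ge 1}$ coincides with the unique AMLE $u^n$ of $g$ on $V^n$ given by Corollary \ref{uniqueness}, which then forces the full family to converge to $u^n$. Since $V^n$ is a finite set, the argument reduces to finite-dimensional analysis combined with a careful passage to the limit in a localized minimization inequality, in the spirit of the classical Euclidean proof of the $p\to\infty$ limit.

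First I would establish that $\{u^n_p\}$ is bounded in $\R^{V^n}$ uniformly in $p$. Testing $p$-minimality against the AMLE $u^n$ yields $I^n_p(u^n_p)\le I^n_p(u^n)\le |E^n|^{1/p}\Lip^n(u^n,V^n\setminus V^0)$, where $|E^n|$ denotes the (finite) number of ordered adjacency pairs in $V^n$. Hence every edge slope $|u^n_p(y)-u^n_p(x)|/\delta_n$ is bounded uniformly in $p$; combined with $u^n_p=g$ on the non-empty set $V^0$, this yields a uniform bound on the values. Along a subsequence $p_k\to\infty$ one therefore gets a pointwise limit $u^\infty:V^n\to\R$ with $u^\infty=g$ on $V^0$.

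The core of the argument is to show $u^\infty\in\AML(V^n)$. Fix a connected $K\subset V^n\setminus V^0$ and a competitor $v:V^n\to\R$ with $v=u^\infty$ on $\partial K$. Define $w_{p_k}:V^n\to\R$ by $w_{p_k}=v$ on $K$ and $w_{p_k}=u^n_{p_k}$ on $V^n\setminus K$; since $V^0\subset V^n\setminus K$, $w_{p_k}=g$ on $V^0$, so $w_{p_k}$ is admissible and $I^n_{p_k}(u^n_{p_k})\le I^n_{p_k}(w_{p_k})$. Because $u^n_{p_k}$ and $w_{p_k}$ coincide outside $K$, the contributions of ordered pairs $(x,y)$ with $x,y\in V^n\setminus K$ cancel from both sides, leaving an inequality for the partial sum restricted to the set of edges touching $K$, namely $E_K:=\{(x,y):y\sim_n x,\;\{x,y\}\cap K\neq\emptyset\}$. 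Every such edge has both endpoints in $\overline K$, since any neighbor of a vertex in $K$ lies in $K\cup\partial K$. Taking $p_k$-th roots and passing to the limit, using $w_{p_k}(z)\to u^\infty(z)=v(z)$ for $z\in\partial K$ (the only vertices outside $K$ entering an edge of $E_K$ where $w_{p_k}$ still depends on $p_k$), together with the elementary fact that $(\sum_i a_i(p)^p)^{1/p}\to\max_i a_i$ whenever $a_i(p)\to a_i\ge 0$, I would deduce
\[
\max_{(x,y)\in E_K}|u^\infty(y)-u^\infty(x)|\le \max_{(x,y)\in E_K}|v(y)-v(x)|.
\]
Dividing both sides by $\delta_n$ and invoking Proposition \ref{Lip_F} to identify each maximum with $\Lip^n(u^\infty,K)$ and $\Lip^n(v,K)$ respectively (using that the max over edges touching $K$ equals $\delta_n\max_{x\in K}\cF^n(u,x)$, which in turn equals $\delta_n\Lip^n(u,K)$ when $K$ is connected) shows $\Lip^n(u^\infty,K)\le\Lip^n(v,K)$, i.e.\ $u^\infty\in\AML(V^n)$.

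Uniqueness of the AMLE (Corollary \ref{uniqueness}) then gives $u^\infty=u^n$; since every subsequential limit equals $u^n$, the entire family $u^n_p$ converges to $u^n$ as $p\to\infty$. The main technical subtlety I expect is the last identification step: one must reconcile the ``sum over edges touching $K$'' produced by the $p$-energy minimization with the Lipschitz constant $\Lip^n(\cdot,K)$ defined through the path distance $d_{n,K}$, and Proposition \ref{Lip_F} is exactly the tool that makes this bridge possible on a connected subgraph; it is also what allows one to localize the global minimization property of $u^n_p$ to an arbitrary connected $K$, which is what the AMLE notion requires.
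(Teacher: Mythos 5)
Your proof is correct, but the core step is organized differently from the paper's. Both arguments share the same compactness step (testing $p$-minimality against $u^n$ to get uniform bounds and extract a subsequential limit) and the same endgame (uniqueness of the AMLE from Corollary \ref{uniqueness} upgrades subsequential to full convergence). Where you diverge is in showing that the limit is an absolute minimizer: the paper first isolates a single-vertex variational property (Lemma \ref{convp0}: a $p$-harmonic function minimizes $I^n_p(\cdot,x)$ among perturbations of its value at one vertex $x$), passes to the limit $p\to\infty$ using the convergence of $p$-norms to the sup-norm to conclude $\bar u\in\AM(V^n)$, and then invokes the equivalence $\AM(V^n)=\AML(V^n)$ of Proposition \ref{equivalence}. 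You instead localize the global minimization to an arbitrary connected $K$ by splicing a competitor $v$ into $u^n_{p_k}$ on $K$, cancelling the edges disjoint from $K$, and passing to the limit in the surviving partial sum; you then use Proposition \ref{Lip_F} to convert the resulting maximum over edges touching $K$ into $\Lip^n(\cdot,K)$, landing directly in $\AML(V^n)$. Your route is closer to the classical Euclidean $p\to\infty$ argument and has the virtue of not relying on the $\AM\Rightarrow\AML$ implication of Proposition \ref{equivalence}, at the price of the extra bookkeeping (which edges survive the cancellation, the fact that $w_{p_k}\to v$ only on $\overline K$, and the bridge between edge maxima and the path-distance Lipschitz constant); the paper's route is shorter because the single-vertex perturbation makes the cancellation trivial and the heavy lifting is delegated to the already-proved equivalence of the four notions in Definition \ref{maindef}.
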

We need a preliminary lemma,   expressing the local character of $p$-harmonic functions.
For $x\in V^n\setminus V^0$ and  $v:V^n\to\R$,  set $$I^n_p(v,x)=\left(\sum_{y\sim_n x}\left|\f{v(y) -v(x)}{\d_n}\right|^p\right)^{\f 1 p}.$$
 \begin{lemma}\label{convp0}
If $u^n_p$ is $p$-harmonic,  then for any $v:V^n\to \R$ such  that $v(y)=u^n_p(y)$ for $y\sim_n x$, we have $I^n_p(u^n_p,x)\le I^n_p(v,x)$.
\end{lemma}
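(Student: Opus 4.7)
The plan is to use the global minimizing property of $u^n_p$ against a carefully constructed one-point perturbation, and the main work is just bookkeeping of how the local energy at $x$ enters the global $p$-energy.

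First I would define the competitor $w: V^n \to \R$ by setting $w(x) = v(x)$ and $w(y) = u^n_p(y)$ for every $y \in V^n \setminus \{x\}$. Since $x \in V^n \setminus V^0$, the perturbation does not touch the boundary, so $w = u^n_p = g$ on $V^0$. Because $u^n_p$ is $p$-harmonic (i.e.\ minimizes $I^n_p$ among functions with prescribed values on $V^0$), this gives the global inequality $I^n_p(u^n_p)^p \le I^n_p(w)^p$.

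Next I would isolate the terms in the double sum
$$I^n_p(u)^p = \sum_{x' \in V^n} \sum_{y \sim_n x'} \left|\f{u(y) - u(x')}{\d_n}\right|^p$$
that are actually affected when we change $u^n_p(x)$ to $w(x)$. The value at $x$ appears only in two groups of terms: those with $x' = x$ (which give $I^n_p(\cdot,x)^p$) and those with $y = x$ (i.e.\ $\sum_{x' \sim_n x}|(u(x) - u(x'))/\d_n|^p$). By symmetry of $\sim_n$ and of the absolute value, these two groups contribute the same quantity. Using that $w$ and $u^n_p$ agree away from $x$, and that $v(y) = u^n_p(y) = w(y)$ for every $y \sim_n x$ while $w(x) = v(x)$, I obtain
$$I^n_p(w)^p - I^n_p(u^n_p)^p = 2\bigl[I^n_p(w,x)^p - I^n_p(u^n_p,x)^p\bigr] = 2\bigl[I^n_p(v,x)^p - I^n_p(u^n_p,x)^p\bigr].$$

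Combining the two displays gives $I^n_p(u^n_p,x)^p \le I^n_p(v,x)^p$ and taking $p$-th roots yields the claim. There is no serious obstacle here; the only point requiring attention is the factor of $2$ coming from the fact that each edge of $V^n$ is counted twice in the double sum defining $I^n_p$, but this factor cancels and does not affect the conclusion.
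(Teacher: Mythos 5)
Your proof is correct and follows essentially the same route as the paper: a one-point perturbation of $u^n_p$ at $x$ compared against the global minimality of the $p$-energy. The only difference is cosmetic --- the paper argues by contradiction and leaves the edge-counting implicit, whereas you argue directly and spell out the factor of $2$ from each edge appearing twice in the double sum, which is exactly the detail needed to justify the paper's claim that $I^n_p(\bar u)<I^n_p(u^n_p)$.
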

\begin{proof}
If by contradiction there exists $v:V^n\to \R$ such that $v(y)=u^n_p(y)$ for $y\sim_n x$ and such that $I^n_p(v,x)\le I^n_p(u^n_p,x)-\e$ for some $\e>0$, then the function $\bar u:V^n\to\R$ defined by
$$
\bar u(y)=\left\{
            \begin{array}{ll}
              u^n_p(y), & \hbox{$y\neq x$;} \\
               v(x), & \hbox{$y=x$.}
            \end{array}
          \right.
$$
is such that $I^n_p(\bar u)< I^n_p(u^n_p)$. Hence a contradiction to the definition of $p$-harmonic function.
\end{proof}
\begin{proof}[Proof of Prop. \ref{pconvergence}]
Let $u^n_p$ be a $p$-harmonic function and $u^n\in \AML(V^n)$ such that $u^n=g$ on $V^0$. Then
\begin{align*}
    I_p(u^n_p)\le I_p(u^n)=\left(\sum_{x\in V^n\setminus V^0}\sum_{y\sim_n x}\left|\cF^n(u^n,x)\right|^p+\sum_{i=1}^3\sum_{x\sim_n q_i}  \left|\f{u^n(x) -u^n(q_i)}{\d_n}\right|^p\right)^{\f 1 p}
\end{align*}
where $\cF^n$ is defined as in \eqref{loclip} and $q_i$, $i=1,2,3$ are the vertices of $V^0$. Hence, by \eqref{lip_equiv_func}
\[
   I_p(u^n_p)\le (4 N+6)^{\f 1 p}\max_{V^n\setminus V^0}\{\cF^n(u^n,x)\}= (4N+6 )^{\f 1 p}\,\Lip^n(u^n,V^n)= (4 N+6)^{\f 1 p} L_0
\]
where $L_0=\max_{i,j=1,2,3}\{g(q_i)-g(q_j)\}$ and $N=\#(V^n\setminus V^0)$ (hence $N=(3^{n+1}-3)/2$). It follows that for any $x\in V^n\setminus V^0$ and
$y\sim_n x$, $|u^n_p(y)-u^n_p(x)|$ is bounded, uniformly in $p\in [1,\infty)$. Since $u^n_p=g$ on $V^0$, then  the functions $u^n_p$ are uniformly bounded in $p$ on $V^n$. Passing to a subsequence, we get that there exists a sequence $\{u_{p_j}\}_{j\in\N}$ which converges to a function $\bar u$ on $V^n$ such that $\bar u=g$ on $V^0$.\\
We claim that $\bar u \in \AM^n(V^n)$ (see Def. \ref{maindef}.(iii)). Let  $x\in V^n$ and   $v:V^n\to \R$ such that $v(y)=u^n_{p_j}(y)$ for $y\sim_n x$.
Then by Lemma \ref{convp0} we have
\begin{align*}
    \left(\sum_{y\sim_n x}\left|\f{u^n_{p_j}(y)-u^n_{p_j}(x)}{\d_n}\right|^p\right)^{\f 1 p}
    \le \left(\sum_{y\sim_n x}\left|\f{u^n_{p_j}(y)-v(x)}{\d_n}\right|^p\right)^{\f 1 p} \qquad \forall j\in \N.
\end{align*}
Passing to the limit for $j\to \infty$ in the previous inequality, since the $p$-norm  in $\R^N$  converges
 to the $\infty$-norm  we get
 \begin{align*}
    \max_{y\sim_n x}\left|\f{\bar u (y)-\bar u(x)}{\d_n}\right|    \le \max_{y\sim_n x}\left|\f{\bar u(y)-v(x)}{\d_n}\right|.
\end{align*}
Hence  $\cF^n(\bar u,x)\le \cF^n(v,x)$ and therefore the claim. Since there exists  a unique $u^n \in  \AM^n(V^n)$ such that $u^n=g$ on $V^0$,
 see  Prop. \ref{equivalence} and Cor. \ref{uniqueness}, it follows that $\bar u=u^n$. Moreover by the uniqueness of $u^n$, any convergent sequence of  $\{u^n_p\}_{p\ge 1}$ tends to $u^n$. Therefore we conclude that $\lim_{p\to\infty} u^n_p=u^n$.
\end{proof}

 We proved the convergence  $u^n_p\to u^n$ for $p\to \infty$ in Proposition \ref{pconvergence}  and the convergence   $u^n\to u$ for $n\to \infty$ in  Theorem \ref{thm_pre}.   A natural question is if it possible to invert the order of the limits.
 Actually we have only partial results.
  A notion of   $p$-harmonic functions on $V^n$ is studied in \cite{hps} with the aim of defining   a $p$-Laplace operator on the Sierpinski gasket in the spirit of Kigami's approach. The  discrete $p$-energy  ${\mathcal{E}}^{(n)}_p $ considered in \cite{hps} is different from  \eqref{penergy}, even if it is dominated from below and from above by $(I^n_p)^p$.
The sequence of the discrete energies  ${\mathcal{E}}^{(n)}_p $   is increasing, hence it is possible to define the $p$-energy ${\mathcal{E}}_p$ on  the Sierpinski gasket $\cS$ as the limit
    \begin{equation}\label{E}
    {\mathcal{E}}_{p} (u)= \lim_{n\to\infty}{\mathcal{E}}^{(n)}_p (u).
    \end{equation}

Note that    a minimizer $v^n_p$   of the energy  ${\mathcal{E}}^{(n)}_p $ defined in \cite{hps}  such that  $v^n_p=g$ on $V^0$  could be different from a minimizer $u^n_p$ of the $p$-energy in \eqref{penergy} with the same boundary datum.

In \cite[Cor.2.4]{hps},  it is proved that, for $p$ fixed, any sequence  $\{v^n_p\}$   of the $p$-harmonic-extensions with respect to the energy   ${\mathcal{E}}^{(n)}_p$ on $V^n$ (such that $v^n_p=g$ on $V^0$) converges uniformly  as $n\rightarrow \infty$ to  a  function $v_p$  that is a $p$-harmonic extension of $g$ on $\cS$  with respect the  limit $p$-energy  defined in  \eqref{E}. \\
Passing to the limit for $p\to \infty$ we can prove that  any sequence  ${v_p}$ of  $p$-harmonic extensions with respect the $p$-energy ${\mathcal{E}}_p$ on $\cS $ such that $v_p=g$ on $V^0$ converges (up to a subsequence)   to a function $v:\cS\to\R$ uniformly in $\cS$
but up to now we are not able to prove that $v$ is a AMLE of $g$ to $\cS$.

\end{document}